\newcommand{\transp}{{\scriptscriptstyle \top}}
\newtheorem{thm}{\bf{Theorem}}[section]
\newtheorem{lemma}[thm]{\bf{Lemma}}
\newtheorem{rem}[thm]{\bf{Remark}}
\def\argmin{{\mathop{\hbox{\rm argmin}\,}}}
\begin{document}

\title[]{{Exact computation of the cumulative distribution function of the Euclidean distance between a point and a random variable uniformly distributed in disks, balls, or polygones and application to Probabilistic Seismic Hazard Analysis}}

\maketitle 

\begin{center}
Vincent Guigues\\
School of Applied Mathematics, FGV\\
Praia de Botafogo, Rio de Janeiro, Brazil\\ 
{\tt vguigues@fgv.br}
\end{center}

\date{}

\begin{abstract} 
We consider a random variable expressed as the Euclidean distance between an arbitrary point 
and a random variable uniformly distributed
 in a closed and bounded set of 
 a three-dimensional Euclidean space.
 Four cases are considered for this set: a union of disjoint disks, a union of disjoint balls, a union of disjoint line segments, and the boundary of a polyhedron. In the first three cases, we provide closed-form expressions of the cumulative distribution
 function and the density. In the last case, we propose an algorithm
 with complexity $O(n \ln n)$, $n$ being the number of edges of the polyhedron,
 that computes exactly the cumulative distribution function.
 An application of these results to probabilistic seismic hazard analysis and extensions 
 are discussed.\\
\end{abstract}

\par {\textbf{Keywords:} Computational Geometry, Geometric Probability, Distance to a random variable, Uniform distribution, Green's theorem, PSHA.\\

\par MSC2010 subject classifications: 60D05, 65D99, 51N20, 65D30, 86A15.\\

\section{Introduction}

Consider a closed and bounded set 
$\mathcal{S} \subset \mathbb{R}^3$ and a random variable
$X:\Omega \rightarrow \mathcal{S}$ uniformly distributed in $\mathcal{S}$.
Given an arbitrary point $P \in \mathbb{R}^3$,
we study the distribution of the Euclidean distance $D:\Omega \rightarrow \mathbb{R}_{+}$ between $P$ and
$X$ defined by $D(\omega)=\|\overrightarrow{P X(\omega)}\|_2$ for any $\omega \in \Omega$.

Denoting respectively the density 
and the cumulative distribution function (CDF) 
of $D$ by $f_D(\cdot)$ and $F_D(\cdot)$, we have $f_D(d)=F_D(d)=0$ 
if $d<\displaystyle \min_{Q \in \mathcal{S}}\;\|\overrightarrow{PQ}\|_2$ while
$f_D(d)=0$ and $F_D(d)=1$
if $d>\displaystyle \max_{Q \in \mathcal{S}}\;\|\overrightarrow{PQ}\|_2$.
For $\displaystyle \min_{Q \in \mathcal{S}}\;\|\overrightarrow{PQ}\|_2 \leq d \leq \displaystyle \max_{Q \in \mathcal{S}}\;\|\overrightarrow{PQ}\|_2$, we have
$$
F_D(d)=
\mathbb{P}(D\leq d)=\frac{\mu(\mathcal{B}(P,d) \cap \mathcal{S})}{\mu(\mathcal{S})}
$$
where $\mu(A)$ is the Lebesgue measure of the set $A$
and $\mathcal{B}(P, d)$ is the ball of center $P$ and radius $d$.
As a result, the computation of the CDF of $D$ 
amounts to a problem of computational geometry, namely
computing the Lebesgue measures  of $\mathcal{S}$
and of $\mathcal{B}(P,d) \cap \mathcal{S}$ for any $d \in \mathbb{R}_{+}$.

We consider four cases for $\mathcal{S}$, represented in Figure \ref{figintro} and denoted by (A), (B), (C), and (D)
in this figure: (A) a disk, (B) a ball, (C) a line segment, and (D) the boundary of a polyhedron.
The cases where $\mathcal{S}$ is a union of disks, a union of balls, or a union of line segments 
are straightforward extensions of cases (A), (B), and (C).
\begin{figure}
\begin{tabular}{l}
\input{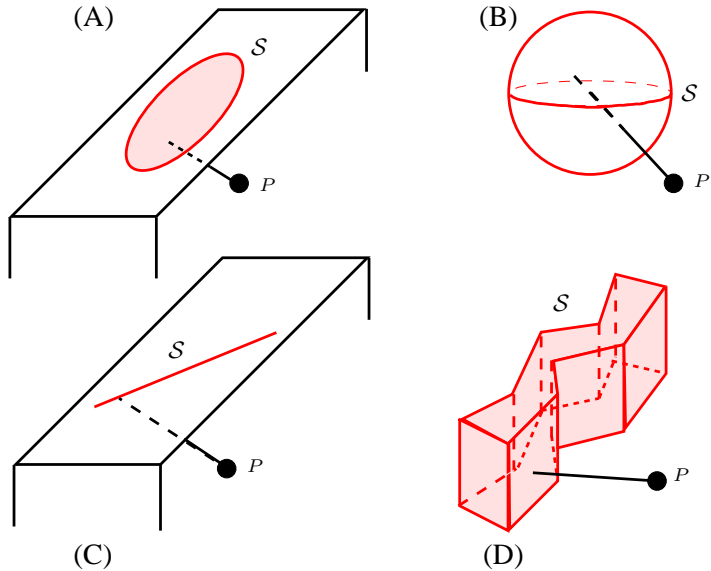}
\end{tabular}
\caption{Different supports $\mathcal{S}$ for random variable $X$.}
\label{figintro}
\end{figure}

The study of these four cases is useful for Probabilistic Seismic Hazard Analysis (PSHA)
to obtain the distribution of the distance between a given location on earth and
the epicenter of an earthquake which, in a given seismic zone, is usually assumed to have a uniform distribution
in that zone modelled as a union of disks, a union of balls, a union of line segments, or the boundary of a 
polyhedron in $\mathbb{R}^3$.
This application, which motivated this study, is described in Section \ref{secapp} 
following the lines of the seminal papers \cite{cornell}, \cite{mcguire}, which paved the way for PSHA. PSHA involves several approximations and models
and therefore, as in 
\cite{imp1}, \cite{kosti2015}, \cite{loffkrev10}, \cite{imp4},\cite{imp5}, \cite{imp3}, \cite{imp2}, our algorithms perform geometric computations over inexact inputs.

In this context, the outline of the paper is as follows. 
In Section \ref{diskdist}, we consider case (A), the case where $\mathcal{S}$
is a disk. 
In Section \ref{secball} and Subsection \ref{secls}, we consider respectively
case (B), where $\mathcal{S}$ is a ball, and case (C), where $\mathcal{S}$ is a line segment.
In these three cases (A), (B), and (C), we obtain closed-form expressions for the 
CDF and the density of $D$.
The main mathematical contribution of this paper is 
Subsection \ref{secpoly1} which provides for case (D), i.e., the case where
$\mathcal{S}$ is the boundary of a polyhedron, an algorithm  with complexity $O(n \ln n)$ where 
$n$ is the number of edges of the polyhedron, that computes
exactly the CDF of $D$. An approximate density for $D$ can
then be obtained.
                        
We are not aware of other papers with these results.
However, particular cases have been discussed: in \cite{Baker2008}, cases (A) and (C) are considered taking 
for $P$ respectively the center of the disk and a point on the perpendicular bisector of the line segment.
In the recent paper \cite{stewartzhang2012}, as a particular case of (D), a rectangle is considered
for $\mathcal{S}$ while $P$ is the center of the rectangle.
In the case where $\mathcal{S}$ is the boundary of a polyhedron, to our knowledge, the current 
versions of the most popular softwares for PSHA 
(OPENQUACK \cite{openquake}, CRISIS 2012 \cite{ordaz20012}) do not compute exactly the CDF of 
$D$. For instance, CRISIS 2012 uses an approximate algorithm that performs a spatial integration subdividing the 
boundary of the polyhedron into 
small triangles.

Numerical experiments are presented in
Section \ref{sec:numexp} while
extensions of our results, in particular to 
handle the case of a general  polyhedron 
and the case where the $\ell_2$-norm is replaced by either the 
$\ell_1$-norm or the $\ell_{\infty}$-norm, are discussed in the last Section \ref{conclusion}.

Throughout the paper, we use the following notation. 
For a point $A$ in $\mathbb{R}^{3}$,
we denote its coordinates with respect to a given Cartesian coordinate system by $x_A, y_A$, and  $z_A$.
For two points $A, B \in \mathbb{R}^{3}$, $\overline{AB}$ is the line segment 
joining points $A$ and $B$, i.e., 
$\overline{AB}=\{tA + (1-t)B \;:\;t \in [0,1]\}$,
$(AB)=\{tA + (1-t)B \;:\;t \in \mathbb{R}\}$ is the line passing through
$A$ and $B$, and
$\overrightarrow{AB}$ is the vector whose
coordinates are $(x_B-x_A, y_B-y_A, z_B-z_A)$. Given two vectors $x, y \in \mathbb{R}^3$, we denote
the usual scalar product of $x$ and $y$ in $\mathbb{R}^3$ by $\langle x, y \rangle = x^\transp y$.
For $P \in \mathbb{R}^2$, we denote the circle and the disk of center $P$ and radius $d$
by respectively $\mathcal{C}(P,d)$ and $\mathcal{D}(P,d)$.

\section{Overview of the four steps of PSHA} \label{secapp}

An important problem in civil engineering is to determine the level of ground shaking
a given structure can withstand. In regions with high levels of seismic activity,
it makes sense to invest in structures able to resist high levels of ground shaking.
On the contrary, in regions without seismic activity during the structure
lifetime, we should not invest in such structures. More precisely, 
it would be reasonable to design structures able to resist up to a Peak Ground Acceleration
$A^* m.s^{-2}$ that is very rarely exceeded, say with a small probability $\varepsilon$, over a given time
window.
This approach is used in PSHA: the confidence level $\varepsilon$ and the time window being fixed (say of $t$ years), the main task of PSHA 
is to estimate
at a given location $P$, the Peak Ground Acceleration (PGA) $A^*$ such that the probability of the event
\begin{equation} \label{eventEt}
\begin{array}{lll}
 E_t(A^*, P)&=&\{\mbox{There is at least an earthquake causing a PGA}  \\
 &&  \;\;\mbox{greater than $A^*$ at }P  \mbox{ in the next }t \mbox{ years}\} \\
\end{array}
\end{equation}
is $\varepsilon$. We present the approach introduced by \cite{cornell}, \cite{mcguire},  to
model and solve this problem. In this approach, we consider the seismic zones
that could have an impact on the PGA at $P$ (see Figure \ref{figzones}
for an example of 4 zones with $P$ belonging to one of these zones).
These zones are bounded sets that do not overlap: typically disks, line segments, or
simple polygones.
\begin{figure}
\begin{tabular}{l}
\input{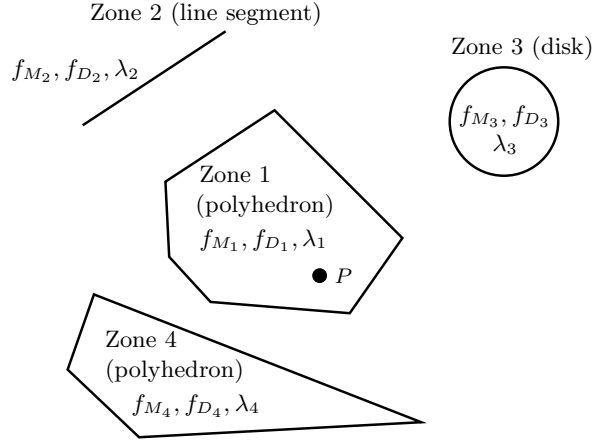}
\end{tabular}
\caption{Seismic zones around a given point $P$.}
\label{figzones}
\end{figure}
The number of earthquakes provoking PGAs
at $P$ greater than $A^*$ over the next $t$ years depends on the frequency of earthquakes in each zone.
As for the ground acceleration at $P$ provoked by the earthquakes of a given zone,
it will depend on the magnitudes of these earthquakes, which are random, and the locations of their
epicenters, which are random too.
To take these factors into account, PSHA uses a four-step process (see Figure \ref{figzones}):
\begin{itemize}
\item[(i)] in zone $i$, the process of earthquake arrivals is modelled as  
a Poisson process with rate $\lambda_i$.
We will assume that the earthquake arrival processes in the different zones are independent.
\item[(ii)] In zone $i$, the magnitude of earthquakes is modelled as a random variable $M_i$ with density $f_{M_i}(\cdot)$.
\item[(iii)] The distance between $P$ and the epicenter of the earthquakes of zone $i$ is modelled as a random variable $D_i$ with density $f_{D_i}(\cdot)$.
\item[(iv)] A ground motion prediction model is chosen expressed as a regression of the ground acceleration
on magnitude, distance, and possibly other factors.
\end{itemize}
We now detail these steps and explain how to combine them to achieve the main task of PSHA: compute
the probability of event \eqref{eventEt} for any $A^*$. The ability to compute this probability
for any $A^*$ makes possible the estimation, by dichotomy, of an acceleration
$A^*$ satisfying $\mathbb{P}\Big(E_t(A^*, P) \Big) = \varepsilon$.

From (i), we obtain that the distribution of the number of earthquakes $N_{t i}$ in zone $i$ on a time window of $t$
time units is given by
$$
\mathbb{P}(N_{t i}=k)=e^{-\lambda_i t} \frac{(\lambda_i t)^k}{k!},\;k \in \mathbb{N},
$$
where the rate $\lambda_i$ represents the mean number of earthquakes in zone $i$
per time unit, say per year.
From now on, we fix an acceleration $A^*$ and introduce the event
\begin{equation} \label{formulapi}
\begin{array}{lll}
E(A^*, P, i)  & =  & \{ \mbox{An earthquake from zone }i \mbox{ causes  a PGA} \\
&  &  \;\;\mbox{greater than }A^* \mbox{ at }P \}
\end{array}
\end{equation}
with its probability $p_i=\mathbb{P}\Big(E(A^*, P, i)\Big)$.
For each earthquake in zone $i$, either event $E(A^*, P, i)$ occurs for this earthquake,
i.e., this earthquake causes a PGA greater than $A^*$ at $P$, or not.
As a result, we can define two new counting processes for zone $i$: the process $\tilde N_{t i}$ counting the
earthquakes causing $PGA>A^*$ at $P$ (events represented by black balls in Figure \ref{figpoisson}) and the
process counting the earthquakes causing $PGA \leq A^*$ at $P$.
\begin{figure}
\begin{tabular}{l}
\input{Poisson.pstex_t}
\end{tabular}
\caption{Splitting of the process of earthquake arrivals in zone $i$ into a
process of earthquakes causing $PGA>A^*$ at $P$ (arrivals represented by black balls) and a process of earthquakes 
causing $PGA \leq A^*$ at $P$.}
\label{figpoisson}
\end{figure}
To proceed, we need the following well-known lemma:
\begin{lemma} Consider a Poisson process $N_t$ with arrival rate $\lambda$.
Assume that arrivals are of two types I and II: type I with 
probability $p$ and type II with probability $1-p$.
We also assume that the arrival types are independent.
Then the process $\tilde N_t$ of type I arrivals is a Poisson process with rate
$\lambda p$.
\end{lemma}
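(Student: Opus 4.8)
The plan is to check that $\tilde N_t$ meets the defining requirements of a Poisson process with rate $\lambda p$, using the classical ``thinning'' argument. Concretely, I would first compute the one-dimensional law of $\tilde N_t$ by conditioning on the total number of arrivals, and then argue that independence and stationarity of increments pass from $N_t$ to $\tilde N_t$ because the arrival marks are independent of $N_t$ and of one another.

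For the first step, fix $t>0$. Given $N_t=n$, each of the $n$ arrivals is of type I independently with probability $p$, so conditionally $\tilde N_t$ is binomial with parameters $(n,p)$. Summing over $n$ against the Poisson law of $N_t$ gives
\begin{equation*}
\mathbb{P}(\tilde N_t=k)=\sum_{n=k}^{\infty}e^{-\lambda t}\frac{(\lambda t)^{n}}{n!}\binom{n}{k}p^{k}(1-p)^{n-k}.
\end{equation*}
I would then cancel the $n!$ against $\binom{n}{k}$, pull the $k$-dependent factor $p^{k}(\lambda t)^{k}/k!$ out of the sum, and re-index by $m=n-k$ so that the residual series becomes $\sum_{m\ge 0}(\lambda t(1-p))^{m}/m!=e^{\lambda t(1-p)}$. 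Combining this with the prefactor $e^{-\lambda t}$ collapses the expression to
\begin{equation*}
\mathbb{P}(\tilde N_t=k)=e^{-\lambda p t}\frac{(\lambda p t)^{k}}{k!},
\end{equation*}
which is exactly the Poisson law with parameter $\lambda p t$.

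For the second step, I would observe that the same conditioning computation applied to the increment of $N$ over an interval $(a,b]$ shows $\tilde N_b-\tilde N_a$ is Poisson with parameter $\lambda p (b-a)$, giving stationary increments. To obtain independent increments over disjoint intervals, I would condition on the (independent) increments of $N$ over those intervals and use that marks attached to disjoint sets of arrivals are mutually independent; the joint law then factorizes, yielding the required independence. Alternatively, one can verify the infinitesimal characterization directly: since $\mathbb{P}(N_{t+h}-N_t=1)=\lambda h+o(h)$ and $\mathbb{P}(N_{t+h}-N_t\ge 2)=o(h)$, marking gives $\mathbb{P}(\tilde N_{t+h}-\tilde N_t=1)=\lambda p\, h+o(h)$ and $\mathbb{P}(\tilde N_{t+h}-\tilde N_t\ge 2)=o(h)$, which identifies the rate as $\lambda p$.

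The marginal computation is entirely routine, so the only point demanding care is establishing genuine independence of increments rather than merely the correct one-dimensional marginals. I expect this to be the main obstacle, and the cleanest way around it is to track the joint distribution of the type-I counts over disjoint intervals (or to pair $\tilde N_t$ with the complementary type-II process and show their joint law factorizes), from which both the independence of increments and, as a bonus, the independence of the two thinned processes follow at once.
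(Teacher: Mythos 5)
Your proposal is correct and follows essentially the same route as the paper: the core of both arguments is the thinning computation that conditions on $N_t=n$, recognizes the conditional binomial law, and resums the series to obtain the Poisson$(\lambda p t)$ marginal. Your additional care in establishing stationary and independent increments (via conditioning on increments of $N$ over disjoint intervals, or the infinitesimal characterization) is in fact more thorough than the paper, which disposes of that point in a single sentence appealing to independence of the arrival types on disjoint time windows.
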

\begin{proof} We compute for every $k \in \mathbb{N}$,
$$
\begin{array}{lll}
\mathbb{P}\Big(\tilde N_{t} =k \Big) & = & \displaystyle \sum_{j=k}^{+\infty} \mathbb{P}\Big(\tilde N_{t} =k | N_{t} =j\Big)  \mathbb{P} \Big( N_{t} =j  \Big)\;\;\;\mbox{[Total Probability Theorem]}\\
&=& \displaystyle \sum_{j=k}^{+\infty} C_j^k p^k (1-p)^{j-k} e^{-\lambda t} \frac{(\lambda t)^j}{j!} \\
&=&e^{-\lambda t} \frac{(\lambda p t)^k}{k!} \displaystyle \sum_{j=0}^{+\infty} \frac{\Big[\lambda (1-p) t\Big]^{j}}{j!} = e^{-\lambda p t} \frac{(\lambda p t)^k}{k!},
\end{array}
$$
which shows that $\tilde N_{t}$ is a Poisson random variable with parameter $\lambda p t$. 
We conclude using the independence of the arrival types on disjoint time windows.\hfill
\end{proof}
This lemma shows that the process $(\tilde N_{t i})_t$ is a Poisson process with rate $\lambda_i p_i$.
Denoting by $\mathcal{N}$ the number of zones, it follows that
the probability to have $k$ earthquakes causing a PGA greater than $A^*$ at $P$
over the next time window of $t$ years is
$$
\displaystyle
\begin{array}{lll}
\mathbb{P}\Big(\displaystyle \sum_{i=1}^{\mathcal{N}} \tilde N_{t i} = k\Big)
&= &\displaystyle \sum_{x_1+\ldots +x_{\mathcal{N}}=k} \mathbb{P}\Big(\tilde N_{t 1} = x_1; \ldots; \tilde N_{t \mathcal{N}} = x_{\mathcal{N}} \Big)\\
&= &\displaystyle{\sum_{x_1+\ldots +x_{\mathcal{N}}=k}  \prod_{i=1}^{\mathcal{N}}}\mathbb{P}\Big(\tilde N_{t i} = x_i \Big)\\
&= &\displaystyle \sum_{x_1+\ldots +x_{\mathcal{N}}=k} \prod_{i=1}^{\mathcal{N}} e^{-\lambda_i p_i t} \frac{(\lambda_i p_i t)^{x_i}}{x_{i}!}
\end{array}
$$
where for the second equality we have used 
the independence of  ${\tilde N}_{t 1}, \ldots, {\tilde N}_{t \mathcal{N}}$.
Taking $k=0$ in the above relation, we obtain
\begin{equation} \label{probinterest}
1-\mathbb{P}( E_t(A^*, P) )= \mathbb{P}(\overline{E_t(A^*, P)})=e^{-(\sum_{i=1}^{\mathcal{N}} \lambda_i p_i) t}.
\end{equation}
Setting $\tilde N_t=\sum_{i=1}^{\mathcal{N}} \tilde N_{t i}$,
the expectation of $\tilde N_t$
which is the mean number of earthquakes causing
a PGA greater than $A^*$ at $P$ over the next $t$ years, can be expressed as
\begin{equation} \label{formlambdatA}
\lambda_t(A^*, P)=\mathbb{E}\Big[ \tilde N_t \Big] =\sum_{i=1}^{\mathcal{N}} \mathbb{E}\Big[ \tilde N_{t i}\Big] = (\sum_{i=1}^{\mathcal{N}} \lambda_i p_i ) t.
\end{equation}
Using this relation and \eqref{probinterest}, the probability of event $E_t(A^*, P)$
can be rewritten
$$
\mathbb{P}( E_t(A^*, P) )=1-e^{-\lambda_t(A^*, P)}
$$
with $\lambda_t(A^*, P)$ given by \eqref{formlambdatA}.

It remains to explain how the probability $p_i$ of event $\eqref{formulapi}$ is computed.
This computation is based on a ground motion prediction model (step (iv) above)
which is a regression equation representing the PGA 
induced by an 
earthquake of magnitude $M$ at distance
$D$ of its epicenter. This relation takes the form
\begin{equation} \label{pgamodel}
\ln PGA = \overline{\ln PGA}(M, D, \theta) + \sigma(M, D, \theta) \varepsilon.
\end{equation}
 In this relation,
$\overline{\ln PGA}(M, D, \theta)$ (resp. $\sigma(M, D, \theta)$) is the 
conditional mean (resp. standard deviation) of $\ln PGA$ given the magnitude $M$
and distance $D$ to the epicenter while $\varepsilon$ is a standard Gaussian
random variable. 
We see that the PGA depends on the magnitude, the distance
to the epicenter and other parameters, generally referred to as $\theta$ (such 
as the ground conditions).
More precisely, the mean $\overline{\ln PGA}(M, D, \theta)$ should increase
with $M$ (the higher the magnitude, the higher the PGA)
and decrease with $D$ (the larger the distance, the lower the PGA).
As an example, the ground motion prediction model in \cite{cornell} is of the form
$$
\ln PGA = 0.152 + 0.859M - 1.803\ln (D+25)  + 0.57 \varepsilon
$$
which amounts to take 
$\overline{\ln PGA}(M, D, \theta)=0.152 + 0.859M - 1.803\ln (D+25)$
and $ \sigma(M, D, \theta)=0.57$.

The density $f_{M_i}(\cdot)$ used for the distribution of the magnitude 
of the earthquakes of
zone $i$ depends on the history of the magnitudes of the earthquakes of that zone.
For a large number of seismic zones, the density proposed by Gutenberg and Richter
\cite{gutenbergrichter} has shown appropriate. It is of the form
$$
f_{M_i}(m) = \frac{\beta_i e^{-\beta_i (m-M_{\min}(i))}}{1-e^{-\beta_i (M_{\max}(i)-M_{\min}(i))}}
$$
for some parameter $\beta_i>0$ where the support of $M_i$
is $[M_{\min}(i), M_{\max}(i)]$.

In each zone, the epicenter has a uniform distribution in that zone.
The seismic zones usually considered in PSHA are disks, balls, line segments, or
the boundary of a polyhedron. As a result, the determination of the density
$f_{D_i}(\cdot)$ of the distance $D_i$ between $P$ and the epicenter in zone $i$ can be determined
analytically or approximately using Sections \ref{diskdist}, \ref{secball}, \ref{secls}, and \ref{secpoly1}.

Gathering the previous ingredients, assuming that $D_i$ and $M_i$ are independent, and using the Total Probability Theorem, we obtain
$$
p_i= \displaystyle \int_{m_i =M_{\min}(i)}^{M_{\max}(i)}
\int_{x_i =0}^{\infty}  \mathbb{P}\Big(PGA>A^{*}| M_i = m_i; D_i = x_i \Big)f_{M_i}(m_i) f_{D_i}( x_i ) dm_i dx_i 
$$
where $\mathbb{P}\Big(PGA>A^{*}| M_i = m_i; D_i = x_i \Big)$ is given by the ground motion prediction model
\eqref{pgamodel}. For implementation purposes, the above integral is generally estimated
discretizing the continuous distributions of  magnitude $M_i, i=1,\ldots, \mathcal{N}$, and distance
$D_i, i=1,\ldots, \mathcal{N}$.

Finally, we mention the existence of an alternative, zoneless approach to PSHA introduced
by \cite{frankel} and \cite{woo}.

\section{Distance to a random variable uniformly distributed in a disk} \label{diskdist}

Let $\mathcal{S}=\mathcal{D}(S_0, R_0)$ be a disk of center $S_0$
and radius $R_0>0$ and let $P$ be a point in the plane containing $\mathcal{S}$ 
at Euclidean distance $R_1$ of $S_0$. We first consider the case where $R_1=0$.
If $0 \leq d \leq R_0$, we get $F_D(d)=\frac{\pi d^2}{\pi R_0^2}=(d/R_0)^2$ and
$f_D(d)=2 \frac{d}{R_0^2}$, if $d>R_0$ we have $F_D(d)=1$ and $f_D(d)=0$ while
if $d<0$ we have $F_D(d)=f_D(d)=0$.
Let us now consider the case where $R_1\geq R_0$.
If $d>R_1+R_0$ we have $F_D(d)=1$ and $f_D(d)=0$ while
if $d<R_1-R_0$ we have $F_D(d)=f_D(d)=0$.
Let us now take $R_1-R_0 \leq d \leq R_1+R_0$. The intersection of the disks $\mathcal{D}(S, R_0)$ and
$\mathcal{D}(P, d)$ is the union of two lenses
having a line segment $\overline{AB}$ in common (see Figures \ref{fig0} and \ref{fig1}).
\begin{figure}
\begin{tabular}{l}
\input{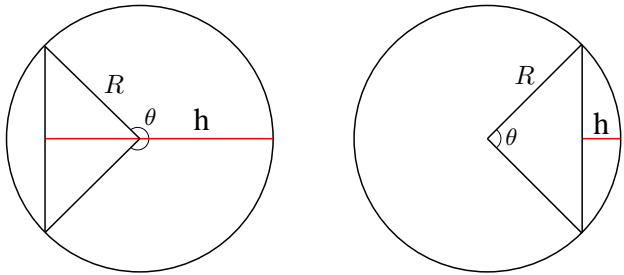}
\end{tabular}
\caption{Lenses of height $h$ in a disk of radius $R$.}
\label{fig0}
\end{figure}
\begin{figure}
\begin{tabular}{l}
\input{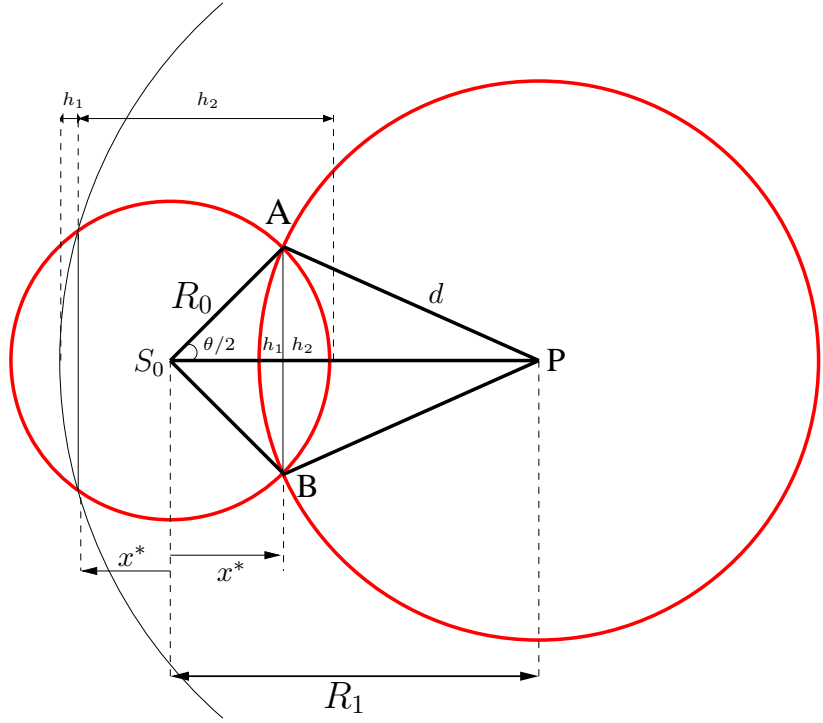}
\end{tabular}
\caption{Random variable $X$ uniformly distributed in a ball of radius $R_0$ and center $S_0$. Case where $R_1\geq R_0>0$.}
\label{fig1}
\end{figure}
Without loss of generality, assume that $(S_0 P)$ is the $x$-axis
and that the equations of the boundaries of the disks
are given by $x^2 + y^2=R_0^2$ and $(x-R_1)^2 + y^2=d^2$.
From these equations, we obtain that the abscissa of the intersection points $A$ and $B$
of the boundaries of the disks is $x^*=\frac{R_0^2 + R_1^2 - d^2}{2 R_1}$. Note that $A=B$
if and only if $d=R_1\pm R_0$. In Figure \ref{fig1}, we represented
a situation where $x^* \geq 0$ and a situation where $x^*<0$. 
In both cases, $\mathcal{D}(S_0, R_0) \cap \mathcal{D}(P, d)$  
is the union of a lens of height $h_1(d)$ in a disk of radius $d$ (the disk $\mathcal{D}(P, d)$)
and of a lens of height $h_2(d)$ in a disk of radius $R_0$ (the disk $\mathcal{D}(S_0, R_0)$)
where
\begin{equation} \label{h1h2def}
\begin{array}{lll}
h_1(d)  & = & d-R_1+ x^* = d-R_1 + \frac{R_0^2 + R_1^2 - d^2}{2 R_1} \mbox{ and}\\
h_2(d)  & = & R_0 - x^* = R_0 - \frac{R_0^2 + R_1^2 - d^2}{2 R_1}.
\end{array}
\end{equation}
Recall that the area $\mathbb{A}(R, h)$ of a lens of height $h$ contained in a disk of radius
$R$ (see Figure \ref{fig0}) is 
$\mathbb{A}(R, h)=R^2 \frac{\theta}{2}-R^2 \sin(\frac{\theta}{2})\cos (\frac{\theta}{2})$ with 
$\cos(\frac{\theta}{2})=\frac{R-h}{R}$, i.e., 
\begin{equation} \label{formulalens}
\mathbb{A}(R, h)=R^2 \mbox{Arccos}\left( \frac{R-h}{R}\right)-(R-h) \sqrt{R^2 - (R-h)^2}.
\end{equation}
In the sequel, we will denote by $\mathcal{A}(\mathbb{S})$ the area of a surface
$\mathbb{S}$. With this notation, it follows that
\begin{equation} \label{areaintersection}
\mathcal{A}( \mathcal{D}(S_0, R_0) \cap \mathcal{D}(P, d) ) = \mathbb{A}(d, h_1(d)) + \mathbb{A}(R_0, h_2(d))
\end{equation}
where
\begin{equation}\label{firstlens}
\mathbb{A}(d, h_1(d))=d^2 \mbox{Arccos}\left(\frac{d^2 + R_1^2 -R_0^2 }{2 R_1 d} \right)- \frac{d^2 + R_1^2 - R_0^2}{2 R_1} \sqrt{d^2 -\left(\frac{d^2 + R_1^2 - R_0^2}{2 R_1}\right)^2}
\end{equation}
and
\begin{equation} \label{secondlens}
\mathbb{A}(R_0, h_2(d))=R_0^2 \mbox{Arccos}\left(\frac{R_0^2 + R_1^2 -d^2 }{2 R_0 R_1} \right)- \frac{R_0^2 + R_1^2 - d^2}{2 R_1} \sqrt{R_0^2 -\left(\frac{R_0^2 + R_1^2 - d^2}{2 R_1}\right)^2}.
\end{equation}
For $R_1-R_0 \leq d \leq R_1+R_0$, we obtain $F_D(d)=\frac{\mathbb{A}(d, h_1(d))+\mathbb{A}(R_0, h_2(d))}{\pi R_0^2}$ where
$\mathbb{A}(d, h_1(d))$ and $\mathbb{A}(R_0, h_2(d))$ are given by 
\eqref{firstlens} and \eqref{secondlens}. The density is
\begin{equation} \label{formuladensD}
f_D(d)=\frac{1}{\pi R_0^2}\Big[ h_2'(d) \frac{\partial \mathbb{A}(R_0, h_2(d))}{\partial h}  + \frac{\partial \mathbb{A}(d, h_1(d))}{\partial R}  + h_1'(d) \frac{\partial \mathbb{A}(d, h_1(d))}{\partial h}\Big]
\end{equation}
where $h_1'(d)=1-\frac{d}{R_1}$, $h_2'(d)=\frac{d}{R_1}$, and
\begin{equation} \label{derivativearea}
\begin{array}{lll}
\displaystyle \frac{\partial \mathbb{A}(R,h)}{\partial R} &  = & 2R \mbox{Arccos}\left(1-\frac{h}{R} \right) - 2 \sqrt{h(2R-h)}, \vspace*{0.1cm}\\
\displaystyle \frac{\partial \mathbb{A}(R,h)}{\partial h} &  = &  2 \sqrt{h(2R-h)}.
\end{array}
\end{equation}
We now consider the case where $R_1>0$ and $R_1<R_0$ (see Figure \ref{fig2}). 
\begin{figure}
\begin{tabular}{l}
\input{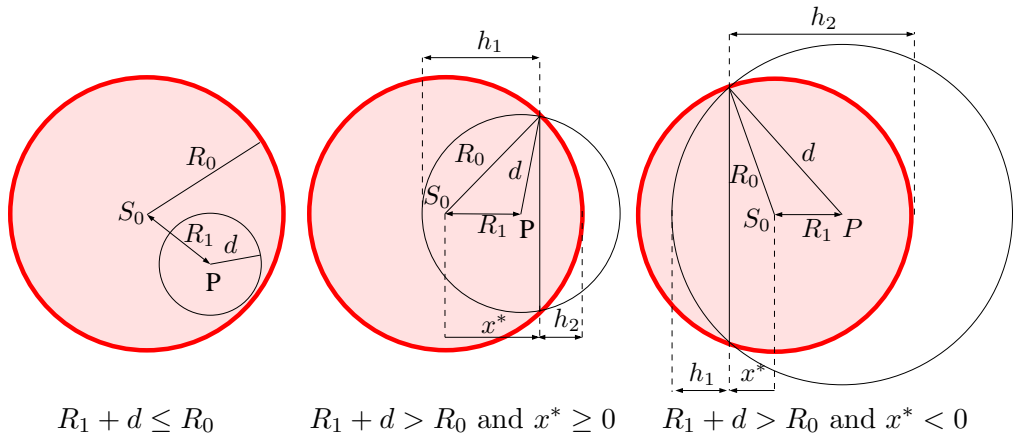}
\end{tabular}
\caption{Random variable $X$ uniformly distributed in a ball of radius $R_0$ and center $S_0$. Case where $0 < R_1 < R_0$.}
\label{fig2}
\end{figure}
If $0 \leq d \leq R_0 - R_1$, we obtain $F_D(d)=\frac{\pi d^2}{\pi R_0^2}=\frac{d^2}{R_0^2}$,
if $d<0$ we have $F_D(d)=f_D(d)=0$ while
if $d>R_0+R_1$ we have $F_D(d)=1$ and $f_D(d)=0$ (see Figure \ref{fig2}). If $R_1 + R_0 \geq  d>R_0-R_1$, both in the case where 
the abscissa $x^*$ of the intersection
points between the boundaries of  $\mathcal{D}(S_0, R_0)$ and $\mathcal{D}(P, d)$  is positive and negative,
we check (see Figure \ref{fig2}) that the area of $\mathcal{D}(S_0, R_0) \cap \mathcal{D}(P, d)$ is still given by 
\eqref{areaintersection} with $\mathbb{A}(d, h_1(d))$ and $\mathbb{A}(R_0, h_2(d))$ 
given respectively by \eqref{firstlens} and \eqref{secondlens}.
Summarizing, if $0<R_1<R_0$ then if $R_1 + R_0 \geq  d>R_0-R_1$, the density of 
$D$ at $d$ is given by \eqref{formuladensD} and if $0 \leq d \leq R_0 - R_1$, we have
$f_D(d)=\frac{2 d}{R_0^2}$.
\begin{figure}
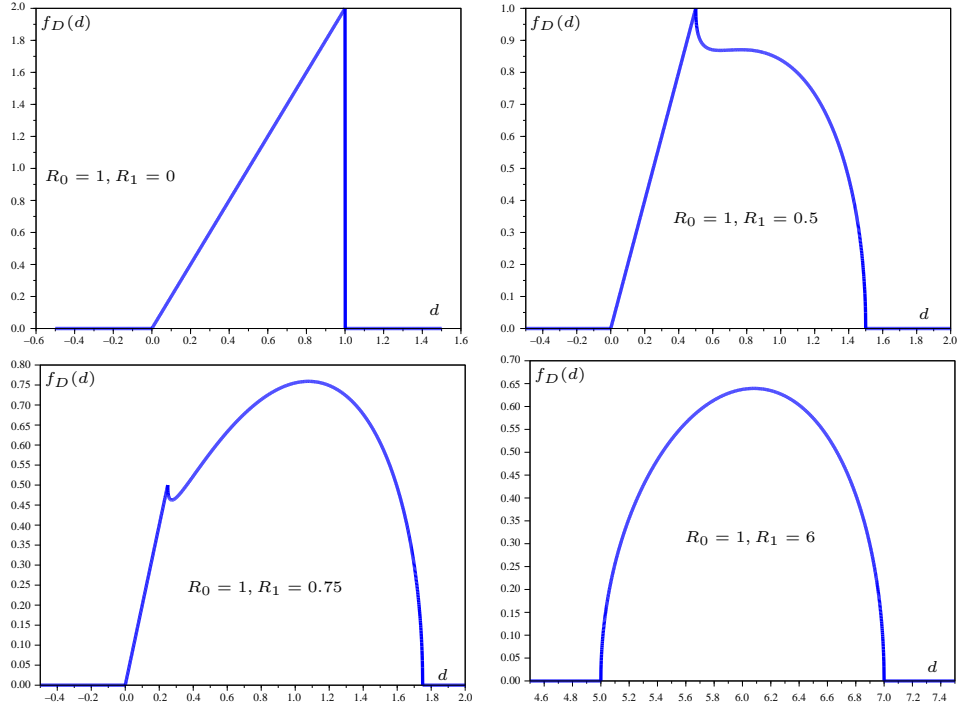

\begin{tabular}{ll}
\input{Disk1.pstex_t} &  \input{Disk2.pstex_t} \\
\input{Disk3.pstex_t} &  \input{Disk4.pstex_t} 
\end{tabular}
\caption{Density of $D$  where $X$ is uniformly distributed in a disk of radius $R_0=1$: some examples.
Top left: $R_1=0$, top right: $R_1=0.5$, bottom left: $R_1=0.75$, bottom right: $R_1=6$.} 
\label{figuredenscircle}
\end{figure} 
The density of $D$  when $X$ is uniformly distributed in a disk is given 
for some examples in Figure \ref{figuredenscircle}.

Finally, we consider the case where $\mathcal{S}$ is a disk $\mathcal{D}$ and
$P \in \mathbb{R}^3$ is not contained in the plane $\mathcal{P}$ containing 
this disk. Let $S_0$ be the center of $\mathcal{S}$ and let $S_1, S_2$ be two points 
of the boundary of the disk such that $\overrightarrow{S_0 S_1}$ and $\overrightarrow{S_0 S_2}$
are linearly independent. We introduce the projection
$P_0=\pi_{\mathcal{P}}[P]=\argmin_{Q \in \mathcal{P}} \|\overrightarrow{PQ}\|_2$ of $P$ onto $\mathcal{P}$.
Since vectors $\overrightarrow{S_0 S_1}$ and $\overrightarrow{S_0 S_2}$
are linearly independent, if $A$ is the  $(3,2)$ matrix $[\overrightarrow{S_0 S_1}, \overrightarrow{S_0 S_2}]$ whose first
column is $\overrightarrow{S_0 S_1}$ and whose second column is $\overrightarrow{S_0 S_2}$, then the matrix
$A^\transp A$ is invertible. It follows that the projection $P_0=\pi_{\mathcal{P}}[P]$ of $P$ onto $\mathcal{P}$ can be expressed
as $\overrightarrow{S_0 P_0}=A (A^\transp A)^{-1} A^\transp \overrightarrow{S_0 P}$.
With this notation, the intersection of $\mathcal{P}$ and the ball $\mathcal{B}(P, d)$ of center $P$
and radius $d$ is either empty or it is a disk of center $P_0$ and radius
\begin{equation}\label{formulard}
 R(d)=\sqrt{d^2-\|\overrightarrow{P P_0}\|_2^2} 
\end{equation}
(see Figure \ref{figdiskR3}).
\begin{figure}
\begin{tabular}{l}
\input{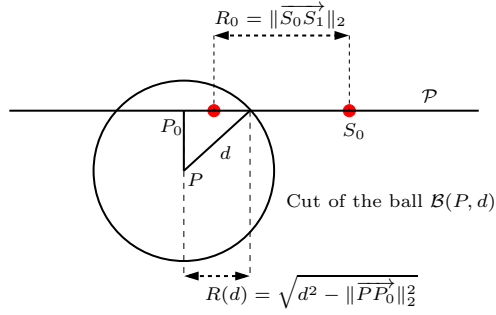}
\end{tabular}
\caption{Euclidean distance to a point uniformly distributed in a disk.}
\label{figdiskR3} 
\end{figure}
In the latter case, denoting this disk by $\mathcal{D}(P_0, R(d))$ and using the fact that
$\mathcal{D} = \mathcal{D} \cap \mathcal{P}$ (recall that $\mathcal{D} \subset \mathcal{P}$), we obtain
$$
\mathcal{D} \cap \mathcal{B}(P, d)= \mathcal{D} \cap \mathcal{P} \cap \mathcal{B}(P, d)=  \mathcal{D} \cap  \mathcal{D}(P_0, R(d)).
$$
Since $\mathcal{D}$ and  $\mathcal{D}(P_0, R(d))$ are disks contained in the plane $\mathcal{P}$, 
setting $R_0=\|\overrightarrow{S_0 S_1}\|_2$ and $R_1=\|\overrightarrow{S_0 P_0}\|_2$,
the previous results provide
the area of their intersection and the following CDFs and densities for $D$:\\
\par {\textbf{Case where $P_0=S_0$:}} The CDF and density of $D$ are given by
$$
\left\{
\begin{array}{ll}
F_D(d)=f_D(d)=0 & \mbox{if }d<\|\overrightarrow{S_0 P}\|_2,\\
\left\{
\begin{array}{l}F_D(d)=\frac{R(d)^2}{\|\overrightarrow{S_0 S_1}\|_2^2} = \frac{d^2 - \|\overrightarrow{S_0 P}\|_2^2}{\|\overrightarrow{S_0 S_1}\|_2^2}\\
f_D(d) = \frac{2 d}{\|\overrightarrow{S_0 S_1}\|_2^2}
\end{array}\right\}
  & \mbox{if }\|\overrightarrow{S_0 P}\|_2 \leq d \leq \sqrt{\|\overrightarrow{S_0 P}\|_2^2 + \|\overrightarrow{S_0 S_1}\|_2^2},\\
F_D(d)=1 \mbox{ and }f_D(d)=0 & \mbox{if }d>\sqrt{\|\overrightarrow{S_0 P}\|_2^2 + \|\overrightarrow{S_0 S_1}\|_2^2}.
\end{array}
\right.
$$
\par {\textbf{Case where $0<\|\overrightarrow{S_0 P_0}\|_2 < \|\overrightarrow{S_0 S_1}\|_2$:}} Setting
\begin{equation}\label{fordminmax}
\begin{array}{l}
d_{\min}=\sqrt{\|\overrightarrow{P P_0}\|_2^2 + (\|\overrightarrow{S_0 S_1}\|_2 - \|\overrightarrow{S_0 P_0}\|_2)^2}\mbox{ and }\\
d_{\max}=\sqrt{\|\overrightarrow{P P_0}\|_2^2 + (\|\overrightarrow{S_0 S_1}\|_2 + \|\overrightarrow{S_0 P_0}\|_2 )^2},
\end{array}
\end{equation}
the CDF of $D$ is given by 
\begin{equation} \label{cdfexpressionD}
\left\{
\begin{array}{lll}
(a) & F_D(d)=0 & \mbox{if }d<\|\overrightarrow{P P_0}\|_2,\\
(b) & F_D(d)=\frac{R(d)^2}{\|\overrightarrow{S_0 S_1}\|_2^2}=\frac{d^2 - \|\overrightarrow{P P_0}\|_2^2}{\|\overrightarrow{S_0 S_1}\|_2^2} & \mbox{if }\|\overrightarrow{P P_0}\|_2 \leq d \leq d_{\min},\\
(c) & F_D(d)=\frac{\mathbb{A}(R(d), h_1(R(d))) +  \mathbb{A}(\|\overrightarrow{S_0 S_1}\|_2, h_2(R(d)))}{\pi \|\overrightarrow{S_0 S_1}\|_2^2} & \mbox{if } d_{\min} \leq d \leq d_{\max},\\
(d) & F_D(d)=1 & \mbox{if }d>d_{\max},
\end{array}
\right.
\end{equation}
where the expression of $\mathbb{A}$ is given by 
\eqref{formulalens} and, where, using the expressions of $h_1$ and $h_2$ and recalling that $R_0=\|\overrightarrow{S_0 S_1}\|_2$ and $R_1=\|\overrightarrow{S_0 P_0}\|_2$,
\begin{equation}\label{newh1h2}
\begin{array}{lll}
h_1(R(d)) & = & \sqrt{d^2-\|\overrightarrow{P P_0}\|_2^2} - \|\overrightarrow{S_0 P_0}\|_2+ \frac{\|\overrightarrow{S_0 S_1}\|_2^2 +  \|\overrightarrow{S_0 P_0}\|_2^2 + \|\overrightarrow{P P_0}\|_2^2 - d^2}{2 \|\overrightarrow{S_0 P_0}\|_2},\\
h_2(R(d)) & = & \|\overrightarrow{S_0 S_1}\|_2 - \frac{\|\overrightarrow{S_0 S_1}\|_2^2 +  \|\overrightarrow{S_0 P_0}\|_2^2 + \|\overrightarrow{P P_0}\|_2^2 - d^2}{2 \|\overrightarrow{S_0 P_0}\|_2}.
\end{array}
\end{equation}
It follows that $f_D(d)=0$ if $d<\|\overrightarrow{P P_0}\|_2$ or $d>d_{\max}$ while
$f_D(d)=\frac{2 d}{\|\overrightarrow{S_0 S_1}\|_2^2}$ if $\|\overrightarrow{P P_0}\|_2 \leq d \leq d_{\min}$.
Finally, if  $d_{\min} \leq d \leq d_{\max}$, we have
\begin{equation} \label{formuladensD1}
\begin{array}{lll}
f_D(d)&  =  &\frac{1}{\pi \|\overrightarrow{S_0 S_1}\|_2^2}\left[\frac{d}{\|\overrightarrow{S_0 P_0}\|_2}
\frac{\partial \mathbb{A}(\|\overrightarrow{S_0 S_1}\|_2, h_2(R(d)))}{\partial h}  + 
\frac{d}{\sqrt{d^2-\|\overrightarrow{P P_0}\|_2^2}}  \frac{\partial \mathbb{A}(R(d), h_1(R(d)))}{\partial R} \right]\\
&&+ \frac{d}{\pi \|\overrightarrow{S_0 S_1}\|_2^2} \left(\frac{1}{\sqrt{d^2-\|\overrightarrow{P P_0}\|_2^2}} -   \frac{1}{\|\overrightarrow{S_0 P_0}\|_2} \right) \frac{\partial \mathbb{A}(R(d), h_1(R(d)))}{\partial h}
\end{array}
\end{equation}
where the expressions of $\frac{\partial \mathbb{A}(R, h)}{\partial R}$ and
$\frac{\partial \mathbb{A}(R, h)}{\partial h}$ are given by \eqref{derivativearea}.\\

\par {\textbf{Case where $\|\overrightarrow{S_0 P_0}\|_2 \geq \|\overrightarrow{S_0 S_1}\|_2$:}} 
With the definitions \eqref{fordminmax} of $d_{\min}$ and $d_{\max}$,
if $d<d_{\min}$ then $F_D(d)=f_D(d)=0$, if $d>d_{\max}$ then $F_D(d)=1$ and $f_D(d)=0$,
while if $d_{\min} \leq d \leq d_{\max}$, $f_D(d)$ is given by \eqref{formuladensD1}
and $F_D(d)$ is given by 
\eqref{cdfexpressionD}-(c) with $h_1(R(d))$ and $h_2(R(d))$
given by \eqref{newh1h2}.

\section{Distance to a random variable uniformly distributed in a ball}\label{secball}

Let $\mathcal{S}=\mathcal{B}(S_0, R_0)$ be
a ball of radius $R_0>0$  and center $S_0$ in $\mathbb{R}^3$ and let $P$ be at
Euclidean distance $R_1$ of $S_0$. 
The computations are identical to those of the previous section
replacing two dimensional lenses and disks by three dimensional caps and
balls. If $R_1=0$ then if $d>R_0$, we have
$f_D(d)=0$ and $F_D(d)=1$, if $d<0$, we have
$f_D(d)=F_D(d)=0$ while
if $0 \leq d \leq R_0$, we obtain
$F_D(d)=\frac{(4/3)\pi d^3}{(4/3)\pi R_0^3}$, i.e.,
$f_D(d)=3\frac{d^2}{R_0^3}$ (see Figure \ref{fig2}). 
If $0<R_1<R_0$, then if $d>R_0+R_1$, we have
$F_D(d)=1$ and $f_D(d)=0$, if $d<0$, we have
$F_D(d)=f_D(d)=0$ while
if $0 \leq d \leq R_0-R_1$, we have
$F_D(d)=\frac{(4/3)\pi d^3}{(4/3)\pi R_0^3}$, i.e.,
$f_D(d)=3\frac{d^2}{R_0^3}$ (see Figure \ref{fig2}). 
If $R_1\geq R_0$ then if $d> R_0 + R_1$, we have
$f_D(d)=0$ and $F_D(d)=1$ and if $d<R_1-R_0$, we have
$f_D(d)=F_D(d)=0$. If $0<R_1<R_0$ and 
$R_0-R_1<d\leq R_0+R_1$ or 
if $R_1 \geq R_0$ and $R_1-R_0 \leq d \leq R_1+R_0$,
then 
$\mathcal{B}(S_0, R_0) \cap \mathcal{B}(P, d)$  
is the union of a spherical cap of height $h_1(d)$ contained in a ball of radius $d$ (the ball $\mathcal{B}(P, d)$)
and of a spherical cap of height $h_2(d)$ contained in a ball of radius $R_0$ (the ball $\mathcal{B}(S_0, R_0)$)
where the expressions \eqref{h1h2def} for $h_1(d)$ and $h_2(d)$ are still valid.
Now recall that the volume of a spherical cap (see Figure \ref{fig0} for a cut of this cap) of height $h$
contained in a ball of radius $R$ in $\mathbb{R}^3$ is
\begin{equation}\label{sphcapvol}
\mathbb{V}(R, h)=\int_{x=R-h}^R \pi r^2(x)dx=\int_{x=R-h}^R \pi [R^2 - x^2]dx=\frac{\pi h^2}{3}(3R-h).
\end{equation}
It follows that if $0<R_1<R_0$ and 
$R_0-R_1<d\leq R_0+R_1$ or 
if $R_1 \geq R_0$ and $R_1-R_0 \leq d \leq R_1+R_0$, we have
$$
\begin{array}{lll}
F_D(d)&=&\frac{3}{4 \pi R_0^3}\left[\mathbb{V}(d, h_1(d))  + \mathbb{V}(R_0, h_2(d)) \right]\vspace*{0.2cm}\\
&=& \frac{1}{4 R_0^3}\Big[ h_1^2(d)(3d-h_1(d)) +  h_2^2(d)(3R_0-h_2(d))     \Big]
\end{array}
$$
where we recall that $h_1(d)$ and $h_2(d)$ are given by \eqref{h1h2def} and the density is
\begin{eqnarray*}
f_D(d) & = & 
\frac{3}{4 \pi R_0^3}\left[ \frac{\partial \mathbb{V}}{\partial R}(d, h_1(d)) + h_1'(d) \frac{\partial \mathbb{V}}{\partial h}(d, h_1(d)) + h_2'(d) \frac{\partial \mathbb{V}}{\partial h}(R_0, h_2(d)) \right]
\end{eqnarray*}
where 
$$
\displaystyle \frac{\partial \mathbb{V}(R,h)}{\partial R}   =  \pi h^2 \mbox{ and }
\displaystyle \frac{\partial \mathbb{V}(R,h)}{\partial h}   =  \pi h (2R - h).
$$
The density of $D$  when $X$ is uniformly distributed in a ball is given 
for some examples in Figure \ref{figuredensball}.
\begin{figure}
\begin{tabular}{ll}
\input{Ball1.pstex_t} &  \input{Ball2.pstex_t} \\
\input{Ball3.pstex_t} &  \input{Ball4.pstex_t} 
\end{tabular}
\caption{Density of $D$ when $X$ is uniformly distributed in a ball of radius $R_0=1$: some examples.
Top left: $R_1=0$, top right: $R_1=0.5$, bottom left: $R_1=0.75$, bottom right: $R_1=6$.} 
\label{figuredensball}
\end{figure} 

\section{Distance to a random variable uniformly distributed in a polygone}\label{secpoly}

\subsection{Distance to a random variable uniformly distributed on a line segment}\label{secls}

Let $\mathcal{S}=\overline{AB}$ be a line segment in $\mathbb{R}^{3}$ 
with $A \neq B$ and let $P \in \mathbb{R}^3$.
We introduce the projection $P_0$ of $P$ onto line $(AB)$:
$$
P_0 = A + \frac{\langle \overrightarrow{AB}, \overrightarrow{AP} \rangle }{\|\overrightarrow{AB}\|_2^2} \overrightarrow{AB}.
$$
This projection $P_0$ belongs to line segment $\overline{AB}$ if and only if 
$\langle \overrightarrow{P_0 A}, \overrightarrow{P_0 B}\rangle  \leq 0$ (see Figure \ref{lineexamples1}).
\begin{figure}
\begin{center} 
\begin{tabular}{l}
\input{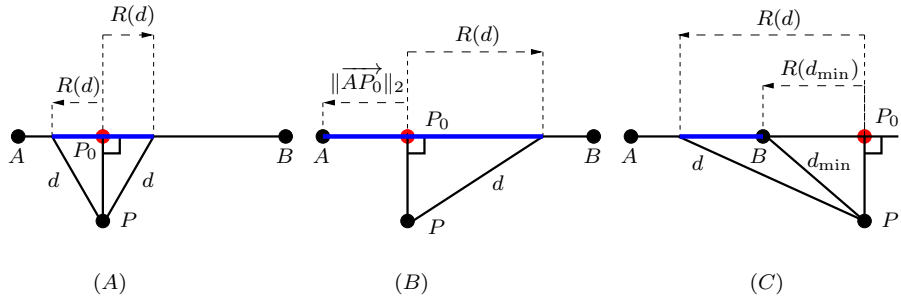}  
\end{tabular}
\caption{Distance to a random variable uniformly distributed on a line segment.} 
\label{lineexamples1}
\end{center} 
\end{figure} 
In this case, 
setting $d_{\min}=\min(\|\overrightarrow{P A}\|_2, \|\overrightarrow{P B}\|_2)$
and $d_{\max}=\max(\|\overrightarrow{P A}\|_2, \|\overrightarrow{P B}\|_2)$,
we obtain the following CDF for $D$ (see Figure \ref{lineexamples1}):
\begin{equation} \label{cdfexpressionDls}
\left\{
\begin{array}{ll}
F_D(d)=0 & \mbox{if }d<\|\overrightarrow{P P_0}\|_2,\\
F_D(d)=\frac{2 R(d)}{\|\overrightarrow{A B}\|_2}=\frac{2 \sqrt{d^2 - \|\overrightarrow{P P_0}\|_2^2}}{\|\overrightarrow{A B}\|_2} & \mbox{if }\|\overrightarrow{P P_0}\|_2 \leq d \leq d_{\min},\\
\begin{array}{lll}
F_D(d)&=&\frac{ \min(\|\overrightarrow{P_0 A}\|_2, \|\overrightarrow{P_0 B}\|_2) +R(d)    }{\|\overrightarrow{A B}\|_2}\\
&=& \frac{\min(\|\overrightarrow{P_0 A}\|_2, \|\overrightarrow{P_0 B}\|_2)   + \sqrt{d^2 - \|\overrightarrow{P P_0}\|_2^2}    }{\|\overrightarrow{A B}\|_2}
\end{array} & \mbox{if } d_{\min} \leq d \leq d_{\max},\\
F_D(d)=1 & \mbox{if }d>d_{\max}.
\end{array}
\right.
\end{equation}
If $P_0$ does not belong to $\overline{AB}$, i.e., if 
$\langle \overrightarrow{P_0 A}, \overrightarrow{P_0 B}\rangle > 0$,
we obtain the following CDF for $D$ (see Figure \ref{lineexamples1}):
\begin{equation} \label{cdfexpressionDls2}
\left\{
\begin{array}{ll}
F_D(d)=0 & \mbox{if }d<d_{\min},\\
F_D(d)=\frac{R(d)-R(d_{\min})}{\|\overrightarrow{A B}\|_2}= \frac{\sqrt{d^2 - \|\overrightarrow{P P_0}\|_2^2}-\sqrt{d_{\min}^2 - \|\overrightarrow{P P_0}\|_2^2}}{\|\overrightarrow{A B}\|_2} & \mbox{if } d_{\min} \leq d \leq d_{\max},\\
F_D(d)=1 & \mbox{if }d>d_{\max}.
\end{array}
\right.
\end{equation}
An analytic expression of the density can be obtained deriving the above CDF.
The density of $D$  when $X$ is uniformly distributed in a line segment  is given 
for two examples in Figure \ref{figuredenssegment}.
\begin{figure}
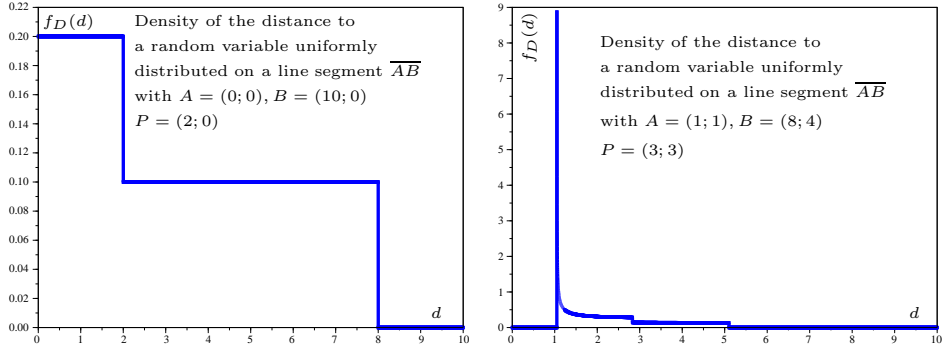

\begin{tabular}{ll}
\input{Line1.pstex_t} &  \input{Line2.pstex_t} 
\end{tabular}
\caption{Density of $D$ when $X$ is uniformly distributed on a line segment 
$\overline{AB}$: some examples.} 
\label{figuredenssegment}
\end{figure} 

\subsection{Simple polygone}\label{secpoly1}

Let $\mathcal{S}$ be a simple polygone contained in a plane
given by its extremal points $\{S_1, S_2, \ldots, S_n\}$
where the boundary of $\mathcal{S}$ is $\cup_{i=1}^{n} \overline{S_i S_{i+1}}$
with the convention that $S_{n+1}=S_1$ 
and where $S_i \neq S_j$ for $i \neq j$ with $1 \leq i,j \leq n$.
We assume that when travelling on the boundary of $\mathcal{S}$ from $S_1$ to $S_2$, 
then from $S_2$ to $S_3$ and so on until the last line segment $\overline{S_n S_1}$, one always has 
the relative interior of $\mathcal{S}$ to the left (see Figure \ref{fig4}).
\begin{figure}
\begin{tabular}{l}
\input{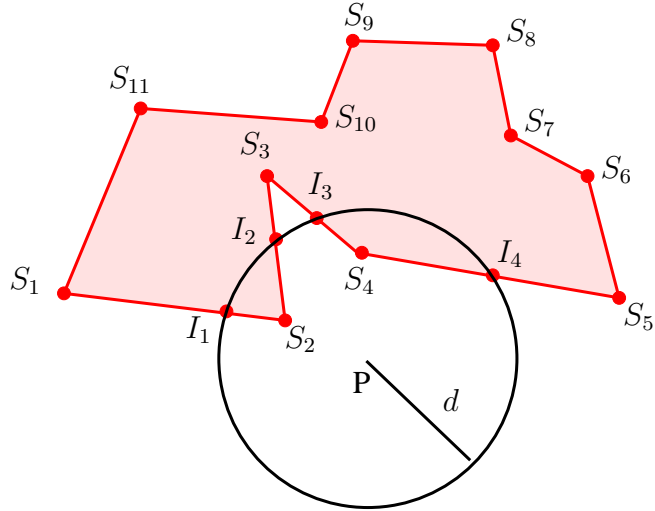}
\end{tabular}
\caption{Random variable uniformly distributed in a polyhedron in the plane.}
\label{fig4}
\end{figure}
Let $P$ be a point in the plane $\mathcal{P}$ containing $\mathcal{S}$.
$F_D(d)$ is the area of the intersection of $\mathcal{S}$ and the disk  $\mathcal{D}(P, d)$ of center
$P$ and radius $d$ divided by the area of $\mathcal{S}$. These areas will be computed making use
of a special case of Green's theorem: if $\mathbb{D}$ is a closed and bounded region in the plane then
the area $\mathcal{A}(\mathbb{D})$ of $\mathbb{D}$ can be expressed as a line integral over the boundary
$\partial \mathbb{D}$ of $D$:
\begin{equation} \label{green}
\mathcal{A}(\mathbb{D})=\frac{1}{2} \oint_{\partial \mathbb{D}} [x dy - y dx].
\end{equation}
Since the boundary of $\mathcal{S}$ is a union of line segments and the boundary of $\mathcal{S} \cap \mathcal{D}(P, d)$
is made of line segments and arcs, we need to compute $\int_{C} [x dy - y dx]$
with $C$ a line segment or an arc. If $C=\overline{AB}$ is a line segment, denoting respectively
the coordinates of $A$ and $B$ by $(x_A, y_A)$ and $(x_B, y_B)$, we obtain
\begin{equation} \label{greensegment}
\mathcal{I}_{\overline{AB}}:=\int_{\overline{AB}} [x dy - y dx]= y_B x_A - y_A x_B.
\end{equation}
Now let $C=\displaystyle \stackrel{\frown}{AB}_{R_0, P} $ be an arc starting at $A=(x_A, y_A)$ and 
ending at $B=(x_B, y_B)$ with $A$ and $B$
belonging to the circle of center $P=(x_P, y_P)$ and radius $R_0>0$.
We assume that when travelling along the arc from $A$ to $B$, the relative interior of the disk is to the left.
If $\theta(A, B)$ is the angle $\angle APB$, using \eqref{green} we obtain
$$
\frac{R_0^2 \theta(A,B) }{2} = \frac{1}{2}\Big(  \mathcal{I}_{\stackrel{\frown}{AB}_{R_0, P}} + \mathcal{I}_{\overline{PA}} + \mathcal{I}_{\overline{BP}} \Big)
$$
where $\mathcal{I}_{\stackrel{\frown}{AB}_{R_0, P}}:= \displaystyle \int_{\stackrel{\frown}{AB}_{R_0, P}} [x dy - y dx]$.
Using \eqref{greensegment}, the above relation can be written
\begin{equation} \label{greenarc1}
\mathcal{I}_{\stackrel{\frown}{AB}_{R_0, P}} = R_0^2 \theta(A, B) + x_P (y_B-y_A) -y_P(x_B-x_A). 
\end{equation}
We introduce the function ${\tt{Angle}}$ defined on the boundary of $\mathcal{D}(P, d)$ 
taking values in $[0, 2\pi[$ and given by
\begin{equation} \label{polarangle}
\begin{array}{lll}
{\tt{Angle}}(x,y) & =  & \mbox{Arccos}\left(\frac{x-x_P}{R_0}\right) \mbox{ if }y \geq y_P \mbox{ and}\\
{\tt{Angle}}(x,y) & =  & 2\pi - \mbox{Arccos}\left(\frac{x-x_P}{R_0}\right) \mbox{ if }y<y_P.
\end{array}
\end{equation}
This function associates to a point of the boundary of $\mathcal{D}(P, d)$ its angle.
With this notation, for two points $A=(x_A, y_A)$ and $B=(x_B, y_B)$ of the boundary of $\mathcal{D}(P,d)$, we have
$$
\begin{array}{lll}
\theta(A,B) & =  & {\tt{Angle}}(x_B, y_B)-{\tt{Angle}}(x_A, y_A) \mbox{ if }{\tt{Angle}}(x_A, y_A) \leq {\tt{Angle}}(x_B, y_B)\\
\theta(A,B) & =  & 2\pi+ {\tt{Angle}}(x_B, y_B)-{\tt{Angle}}(x_A, y_A) \mbox{ otherwise }
\end{array}
$$
and formula \eqref{greenarc1} can be written
\begin{equation} \label{greenarc}
\left\{
\begin{array}{l}
\mathcal{I}_{\stackrel{\frown}{AB}_{R_0, P}} =  R_0^2 ({\tt{Angle}}(x_B, y_B)-{\tt{Angle}}(x_A, y_A)) + x_P (y_B-y_A) -y_P(x_B-x_A)\\
\mbox{ if }{\tt{Angle}}(x_A, y_A) \leq {\tt{Angle}}(x_B, y_B) \mbox{ and }\\
\mathcal{I}_{\stackrel{\frown}{AB}_{R_0, P}} =  R_0^2 (2 \pi + {\tt{Angle}}(x_B, y_B)-{\tt{Angle}}(x_A, y_A)) + x_P (y_B-y_A)\\
\hspace*{1.8cm}-y_P(x_B-x_A)\mbox{ if }{\tt{Angle}}(x_A, y_A) > {\tt{Angle}}(x_B, y_B).
\end{array}
\right.
\end{equation}
To compute the area of the intersection $\mathcal{S} \cap \mathcal{D}(P, d)$, we need
to determine the intersections between the boundary of $\mathcal{S}$ and the circle $\mathcal{C}(P, d)$
of center $P$ and radius $d$.
This will be done using Algorithm 1 which computes the intersection between a given line segment 
$\overline{AB}$ with $A \neq B$ and the
sphere of center $P$ and radius $d$ in $\mathbb{R}^3$.
When this intersection is nonempty, let $I_1(d)$ and $I_2(d)$ be the intersection points (eventually $I_1(d)=I_2(d)$). 
Writing $I_i(d)$ as 
\begin{equation}\label{formulaIi}
I_i(d)=A+t_i \overrightarrow{AB},
\end{equation}
$t_i$ solves
$\|\overrightarrow{PA}+t_i \overrightarrow{AB}\|_2^2 = d^2$.
Introducing 
\begin{equation} \label{formulatidelta1}
\Delta=\langle \overrightarrow{PA}, \overrightarrow{AB} \rangle^2 - \|\overrightarrow{AB}\|_2^2 (\|\overrightarrow{PA}\|_2^2 - d^2),
\end{equation}
if $\Delta<0$ then the boundary of $\mathcal{S}$  and $\mathcal{C}(P, d)$ have an empty intersection while if
$\Delta \geq 0$ the intersections $I_1(d)$ and $I_2(d)$
are given by \eqref{formulaIi} where
\begin{equation} \label{formulatidelta}
t_i = \frac{-\langle \overrightarrow{PA}, \overrightarrow{AB} \rangle \pm \sqrt{ \Delta  }}{\|\overrightarrow{AB}\|_2^2}.
\end{equation}
We are now in a position to write Algorithm 1, observing that
$I_i(d) \in (AB)$ belongs to line segment $\overline{AB}$ if and only if
$\langle \overrightarrow{I_i(d) A} ,  \overrightarrow{I_i(d) B}\rangle \leq 0$.\\
\rule{\linewidth}{1pt}
\par {\textbf{Algorithm 1: Computation of the intersection points between line segment $\overline{AB}$ with $A \neq B$ and the
sphere of center $P$ and radius $d$ in $\mathbb{R}^3$.}}\\
\rule{\linewidth}{1pt}
\par {\textbf{Inputs:}} $A, B, P, d$.\\
\par {\textbf{Initialization:}} {\tt{N}}=0; //Will store the number of intersections (0, 1, or 2).
\par {\tt{List\_Intersections=Null}}; //Will store the intersection points.\\
\par //Check if line $(AB)$ and the sphere have an empty intersection or not
\par Compute $\Delta=\langle \overrightarrow{PA}, \overrightarrow{AB} \rangle^2 - \|\overrightarrow{AB}\|_2^2 (\|\overrightarrow{PA}\|_2^2 - d^2)$.
\par {\textbf{If}} $\Delta \geq 0$ {\textbf{then}} //if $\Delta<0$ the intersection is empty.
\par \hspace*{0.4cm}{\textbf{If}} $\Delta=0$ {\textbf{then}} //the intersection of $(AB)$ and the sphere is a singleton $\{I\}$
\par \hspace*{0.8cm}Compute $I=A+t \overrightarrow{AB}$ where 
$
t = \frac{-\langle \overrightarrow{PA}, \overrightarrow{AB} \rangle}{\|\overrightarrow{AB}\|_2^2}
$
(see \eqref{formulaIi}, \eqref{formulatidelta}) and
\par \hspace*{0.8cm}check if $I$ belongs to $\overline{AB}$:
\par \hspace*{0.8cm}{\textbf{If}} $\langle \overrightarrow{IA}, \overrightarrow{IB} \rangle \leq 0$, {\textbf{then}} //$I$ belongs to $\overline{AB}$
\par \hspace*{1.2cm}{\tt{List\_Intersections}}=\{$I$\}, {\tt{N=1}}.
\par \hspace*{0.8cm}{\textbf{End If}}
\par \hspace*{0.4cm}{\textbf{Else}}
\par \hspace*{0.8cm}Compute the intersections $I_1(d)$ and $I_2(d)$ of $(AB)$ and the sphere
\par \hspace*{0.8cm}given by \eqref{formulaIi}, \eqref{formulatidelta}.
\par \hspace*{0.8cm}{\textbf{If}} $\langle \overrightarrow{I_1(d) A} ,  \overrightarrow{I_1(d) B}\rangle \leq 0$ {\textbf{then}} //$I_1(d)$ belongs to $\overline{AB}$
\par \hspace*{1.2cm}{\textbf{If}} $\langle \overrightarrow{I_2(d) A} ,  \overrightarrow{I_2(d) B}\rangle \leq 0$ {\textbf{then}}  //$I_2(d)$ belongs to $\overline{AB}$
\par \hspace*{1.6cm}//$I_1(d)$ and $I_2(d)$ belong to $\overline{AB}$
\par \hspace*{1.6cm}{\tt{List\_Intersections}}=\{$I_1(d), I_2(d)$\}, {\tt{N=2}}.
\par \hspace*{1.2cm}{\textbf{Else}} //Only $I_1(d)$ belongs to the intersection
\par \hspace*{1.6cm}{\tt{List\_Intersections}}=\{$I_1(d)$\}, {\tt{N=1}}.
\par \hspace*{1.2cm}{\textbf{End If}}
\par \hspace*{0.8cm}{\textbf{Else}}
\par \hspace*{1.2cm}{\textbf{If}} $\langle \overrightarrow{I_2(d) A} ,  \overrightarrow{I_2(d) B}\rangle \leq 0$ {\textbf{then}} //$I_2(d)$ belongs to $\overline{AB}$
\par \hspace*{1.6cm}{\tt{List\_Intersections}}=\{$I_2(d)$\}, {\tt{N=1}}.
\par \hspace*{1.2cm}{\textbf{End If}}
\par \hspace*{0.8cm}{\textbf{End If}}
\par \hspace*{0.4cm}{\textbf{End If}}
\par {\textbf{End If}}\\
\par {\textbf{Outputs:}} {\tt{N}}, {\tt{List\_Intersections}}.\\
\rule{\linewidth}{1pt}\\

Algorithm 4 which computes the CDF of $D$ will also make use of Algorithm 2 
that (i) computes the minimal distance $d_{\min}$ and maximal distance $d_{\max}$
between $P$ and the boundary of $\mathcal{S}$, (ii) computes the area of $\mathcal{S}$,
and (iii) determines if $P$ belongs to the relative interior of $\mathcal{S}$ or not.
The computation of the area of $\mathcal{S}$ will be done using formula \eqref{green}.
To know if $P$ belongs to the relative interior of $\mathcal{S}$ or not, we
compute the {\em{crossing number}} (stored in variable {\tt{Crossing\_Number}}
of Algorithm 2) for point $P$ and polyhedron $\mathcal{S}$. Let $R$ be the ray starting at $P$ and parallel to the positive $x$-axis.
The crossing number counts the number of times ray $R$ crosses the boundary of $\mathcal{S}$
going either from the inside to the outside of $\mathcal{S}$ or from the outside to the inside of $\mathcal{S}$.
If the crossing number is odd then $P$ belongs to the relative interior of $\mathcal{S}$.
Otherwise, the crossing number is even and $P$ is on the boundary of  $\mathcal{S}$ or outside $\mathcal{S}$.

Though the computation of the crosssing number (the value of variable {\tt{Crossing}}
{\tt{\_Number}} in
the end of Algorithm 2) is known (see for instance \cite{rourkebook}), we recall it here for the sake
of self-completeness. For each edge $\overline{S_i S_{i+1}}$ of the polygone, we consider its
intersection with $R$. Each time a single intersection point is found that belongs to the relative interior of an edge,
{\tt{Crossing\_Number}} increases by one. If the intersection between the edge and the ray is 
nonempty but is not a single point from the relative interior of the edge, then either this intersection
is an extremal point or it is the whole edge. There are 8 possibles cases, denoted by A-H in Figure \ref{cncases}.
\begin{figure}
\begin{center} 
\begin{tabular}{l}
\input{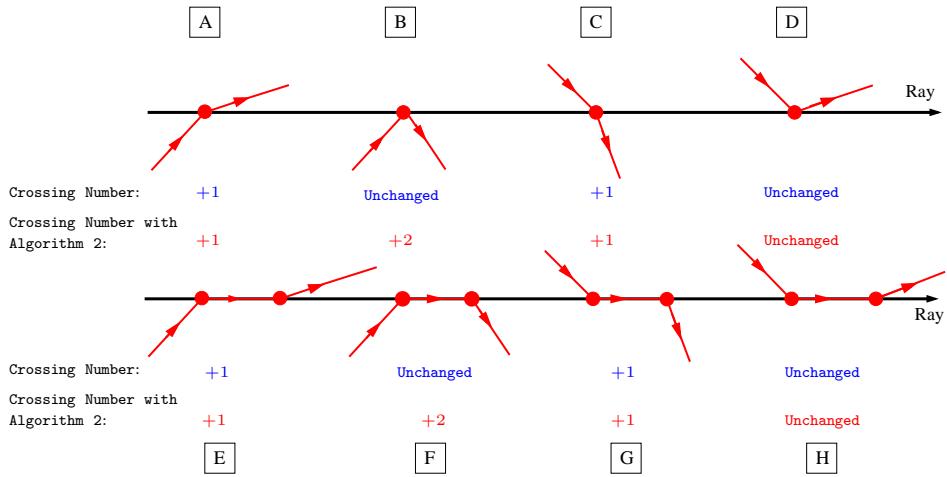}
\end{tabular}
\caption{Increase in the crossing number when the ray passes through an extremal point of the polygone or when an edge of the polygone is
contained in the ray.} 
\label{cncases}
\end{center} 
\end{figure} 
This figure also provides the increase in the crossing number in each case. To deal with these cases, the following
(known) rules are used in Algorithm 2: (a) horizontal edges (edges $\overline{S_i S_{i+1}}$  
with $y_{S_i}=y_{S_{i+1}}$) are not considered,
(b) for upward edges (edges $\overline{S_i S_{i+1}}$  with $y_{S_i}<y_{S_{i+1}}$), only the final vertex is counted as
an intersection, and 
(c) for downward edges (edges $\overline{S_i S_{i+1}}$  with $y_{S_i}>y_{S_{i+1}}$), only the starting vertex is counted as
an intersection.\footnote{Alternatively, we can of course count only the starting vertices of upward edges and the final vertices of downard edges.}
The increase in the crossing number using these rules is reported for cases A-H in Figure \ref{cncases}.
Comparing with the expected increase in the crossing number in each case, we see that
variable {\tt{Crossing\_Number}} that is updated using these rules in Algorithm 2,
will be even if and only if $P$ is on the boundary of  the polygone or outside the polygone, as expected.
\rule{\linewidth}{1pt}
\par {\textbf{Algorithm 2: Given a polygone $\mathcal{S}$ contained in a plane and a point $P$ in that plane, the 
algorithm computes the area of $\mathcal{S}$, the crossing number, and the minimal and maximal distances from $P$ to the boundary of $\mathcal{S}$.}}\\
\rule{\linewidth}{1pt}
\par {\textbf{Inputs:}} $P$ and the vertices $S_1, S_2, \ldots, S_n$ of a polygone contained in a plane.\\
\par {\textbf{Initialization:}} $\mathcal{L}=0$. //Will store line integral \eqref{green} taking $\mathbb{D}=\mathcal{S}$, i.e., 
\par \hspace*{3.6cm}//will store $\mathcal{A}(\mathcal{S})$.
\par {\tt{Crossing\_Number}}=0. //Will store the crossing number.
\par $d_{\min}=+\infty$. //Will store the minimal distance from $P$ to the boundary of $\mathcal{S}$.
\par $d_{\max}=0$. //Will store the maximal distance from $P$ to the boundary of $\mathcal{S}$.\\
\par {\textbf{For}} $i=1,\ldots,n$,
\par {\hspace*{0.7cm}}$\mathcal{L}=\mathcal{L} +\frac{1}{2} \mathcal{I}_{\overline{S_i S_{i+1}}}$ where for a line segment $\overline{AB}$, $\mathcal{I}_{\overline{AB}}$ is given by \eqref{greensegment}.
\par {\hspace*{0.7cm}}//Computation of the crossing number
\par {\hspace*{0.7cm}}{\textbf{If} $y_{S_i} < y_P \leq y_{S_{i+1}}$ or $y_{S_{i+1}} < y_P \leq y_{S_{i}}$ {\textbf{then}
\par {\hspace*{1.1cm}}//Compute the abscissa $x_I$ of the intersection $I$ of the line $y=y_P$
\par {\hspace*{1.1cm}}//and line segment $\overline{S_{i} S_{i+1}}$:
$$
x_I = x_{S_i} + \frac{x_{S_{i+1}}-x_{S_i}}{y_{S_{i+1}}-y_{S_i}}(y_P - y_{S_i}).
$$
\par {\hspace*{1.1cm}}{\textbf{If} $x_I > x_P$ {\textbf{then} 
$$
{\tt{Crossing\_Number}}={\tt{Crossing\_Number}}+1
$$
\par {\hspace*{1.1cm}}{\textbf{End If} 
\par {\hspace*{0.7cm}}{\textbf{End If} 
\par {\hspace*{0.7cm}}//Computation of the maximal distance from $P$ to the boundary of $\mathcal{S}$
\par {\hspace*{0.7cm}}$d_{\max}=\max(d_{\max}, \|\overrightarrow{P S_i}\|_2)$
\par {\hspace*{0.7cm}}//Computation of the minimal distance from $P$ to the boundary of $\mathcal{S}$
\par {\hspace*{0.7cm}}Compute the projection $P_0$ of $P$ onto line $(S_i S_{i+1})$:
$$
P_0 = S_i  + \frac{\langle \overrightarrow{S_i P}, \overrightarrow{S_i S_{i+1}} \rangle }{\|\overrightarrow{S_i S_{i+1}}\|_2^2} \overrightarrow{S_i S_{i+1}}.
$$
\par {\hspace*{0.7cm}}{\textbf{If}} $\langle \overrightarrow{P_0 S_i}, \overrightarrow{P_0 S_{i+1}} \rangle \leq 0$   {\textbf{then}}
\par \hspace*{1.1cm}//$P_0$ belongs to $\overline{AB}$ 
\par \hspace*{1.1cm}$d_{\min}=\min( d_{\min}, \|\overrightarrow{P P_0}\|_2$).
\par {\hspace*{0.7cm}}{\textbf{Else}}
\par \hspace*{1.1cm}$d_{\min}=\min(d_{\min}, \|\overrightarrow{P {S_{i}}}\|_2, \|\overrightarrow{P {S_{i+1}}}\|_2)$.
\par {\hspace*{0.7cm}}{\textbf{End If} 
\par {\textbf{End For}}\\
\par {\textbf{Outputs:}} {\tt{Crossing\_Number}}, $\mathcal{L}, d_{\min}, d_{\max}$.\\
\rule{\linewidth}{1pt}
The outputs of Algorithm 2 allow us to know if $P$ belongs to $\mathcal{S}$ or not.
 Indeed, $P$ belongs to $\mathcal{S}$ if and only if $P$ belongs to the relative interior of $\mathcal{S}$, which
 occurs if and only if the crossing number is odd, or if $P$ is on the boundary of $\mathcal{S}$, which occurs
 if and only if $d_{\min}=0$. As a result, $P$ belongs to $\mathcal{S}$ if and only if
 {\tt{Crossing\_Number}} is odd or $d_{\min}=0$.
 \begin{rem} The crossing number computed replacing the condition $x_I > x_P$ by
 $x_I \geq x_P$ in Algorithm 2 will not necessarily be odd if $P$ belongs to the boundary of $\mathcal{S}$.
 For instance, if $\mathcal{S}$ is the rectangle
 $\mathcal{S}=\{(x,y) \;:\; x_1 \leq x \leq x_2,\;y_1 \leq y \leq y_2\}$ then if the condition
  $x_I > x_P$ is replaced by
 $x_I \geq x_P$ in Algorithm 2, if we take $P=((x_1+x_2)/2, y_1)$ then
 variable {\tt{Crossing\_Number}} will be even while if we take $P=(x_2, (y_1+y_2)/2)$
 this variable will be odd. However, both points belong to the boundary of $\mathcal{S}$.
 \end{rem}

Let us now comment on Algorithm 4 that
computes the cumulative distribution function of $D$ using
Algorithms 1 and 2. 

We first explain the different steps of Algorithm 4 when there is at least an edge of
$\mathcal{S}$ that has a nonempty intersection with both the relative interior of 
$\mathcal{D}(P, d)$ and the complement of $\mathcal{D}(P, d)$. In other words, we exclude for the moment the cases
$\mathcal{D}(P, d) \subset \mathcal{S}$,  $\mathcal{S} \subset \mathcal{D}(P, d)$, and
$\mathcal{D}(P, d) \cap \mathcal{S} = \emptyset$.

In this case, at the end of Algorithm 4, $\ell$ stores line integral \eqref{green} with 
$\mathbb{D}=\mathcal{S} \cap \mathcal{D}(P, d)$, i.e.,
the area of $\mathcal{S} \cap \mathcal{D}(P, d)$.

In the first {\textbf{For}} loop of Algorithm 4, starting from $\ell=0$, we update
$\ell$ travelling along the edges of $\mathcal{S}$
always leaving  
the relative interior of $\mathcal{S}$ to the left. 
In the end of this loop, $\ell$ is the sum of
 line integrals \eqref{greensegment} computed for all the 
line segments belonging to the boundary of $\mathcal{S} \cap \mathcal{D}(P,d)$.
More precisely, at iteration $i$ of this loop, we consider edge $\overline{S_i S_{i+1}}$.
\begin{figure}
\begin{tabular}{l}
\input{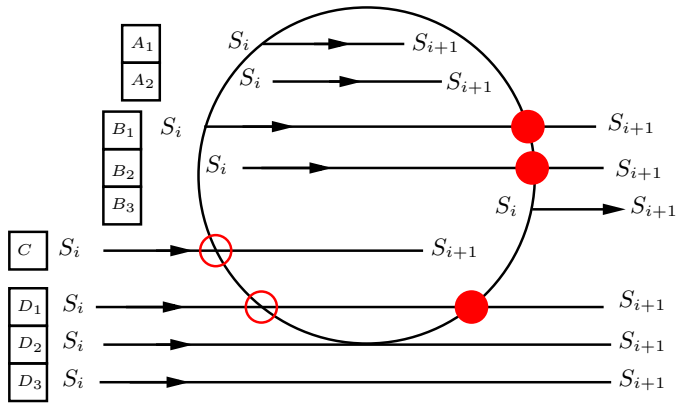}
\end{tabular}
\caption{Cases where $S_{i+1}$ is not on the boundary of $\mathcal{D}(P,d)$.}
\label{fignotboundary}
\end{figure}
For this edge, 6 cases can happen:
\begin{itemize}
\item[(i)] $S_i$ belongs to $\mathcal{D}(P, d)$ and $S_{i+1}$ belongs to the relative interior of $\mathcal{D}(P, d)$. In this case,
the whole segment $\overline{S_i S_{i+1}}$ belongs to the boundary of $\mathcal{S} \cap \mathcal{D}(P,d)$
and $\ell \leftarrow \ell +\frac{1}{2}\mathcal{I}_{\overline{S_i S_{i+1}}}$. This corresponds to subcases $A_1$ 
(where $S_i$ is on the boundary of $\mathcal{D}(P,d)$)
and $A_2$ (where $S_i$ belongs to the relative interior of $\mathcal{D}(P,d)$) in Figure \ref{fignotboundary}.
\item[(ii)] $S_i$ belongs to $\mathcal{D}(P, d)$ and $S_{i+1}$ does not belong to $\mathcal{D}(P, d)$.
In this situation, either $S_i$ belongs to the boundary of $\mathcal{D}(P, d)$ 
(subcases $B_1$  and $B_3$ in Figure \ref{fignotboundary}) or $S_i$ belongs
to the relative interior of $\mathcal{D}(P, d)$ (subcase $B_2$ in Figure \ref{fignotboundary}).
If $\overline{S_i S_{i+1}}$ and $\mathcal{C}(P,d)$ have an
 intersection point $I_i$ that is different from $S_i$ then
$\overline{S_{i} I_i}$ belongs to the boundary of $\mathcal{S} \cap \mathcal{D}(P,d)$
and $\ell \leftarrow \ell +\frac{1}{2}\mathcal{I}_{\overline{S_i I_{i}}}$.
\item[(iii)] $S_i$ belongs to $\mathcal{D}(P, d)$ and $S_{i+1}$ is on the boundary of $\mathcal{D}(P, d)$.
As in (i), the whole segment $\overline{S_i S_{i+1}}$ belongs to the boundary of $\mathcal{S} \cap \mathcal{D}(P,d)$
and $\ell \leftarrow \ell +\frac{1}{2}\mathcal{I}_{\overline{S_i S_{i+1}}}$.
\item[(iv)] $S_i$ does not belong to $\mathcal{D}(P, d)$ and $S_{i+1}$ belongs to the relative interior of $\mathcal{D}(P, d)$
(case $C$ in Figure \ref{fignotboundary}). In this case,
$\overline{S_i S_{i+1}}$ and $\mathcal{C}(P,d)$ have a single intersection point $I_i$,
$\overline{I_i S_{i+1}}$ belongs to the boundary of $\mathcal{S} \cap \mathcal{D}(P,d)$,
and $\ell \leftarrow \ell +\frac{1}{2}\mathcal{I}_{\overline{I_{i} S_{i+1}}}$.
\item[(v)] Both $S_i$ and $S_{i+1}$ are outside $\mathcal{D}(P,d)$.
There are three subcases: $\overline{S_i S_{i+1}}$ and $\mathcal{C}(P,d)$ have two intersection points 
$I_{i 1}$ and $I_{i 2}$ (case $D_1$ in Figure \ref{fignotboundary}); 
$\overline{S_i S_{i+1}}$ and $\mathcal{C}(P,d)$ have a single intersection point
(case $D_1$ in Figure \ref{fignotboundary}); or 
$\overline{S_i S_{i+1}}$ and $\mathcal{C}(P,d)$ have an empty intersection
(case $D_3$ in Figure \ref{fignotboundary}). In case $D_1$, 
$\overline{I_{i 1} I_{i 2}}$ belongs to the boundary of $\mathcal{S} \cap \mathcal{D}(P,d)$
and $\ell \leftarrow \ell +\frac{1}{2}\mathcal{I}_{\overline{I_{i 1} I_{i 2}}}$.
\item[(vi)] $S_i$ does not belong to $\mathcal{D}(P, d)$ and $S_{i+1}$ is on the boundary of $\mathcal{D}(P, d)$.
If $\overline{S_i S_{i+1}}$ and $\mathcal{C}(P,d)$ have two intersection points $I_i$ and $S_{i+1}$ then
$\overline{I_i S_{i+1}}$ belongs to the boundary of $\mathcal{S} \cap \mathcal{D}(P,d)$
and $\ell \leftarrow \ell +\frac{1}{2}\mathcal{I}_{\overline{I_{i} S_{i+1}}}$.
\end{itemize}
We also have to determine the arcs that belong to the boundary of $\mathcal{S} \cap \mathcal{D}(P,d)$.
A simple way to do this would be as follows: 
\begin{itemize}
\item[(a)] store all the intersections between the edges of the polygone and the boundary
of  $\mathcal{D}(P,d)$.
\item[(b)] Sort these intersection points $(x_i, y_i)$ 
in ascending order of their angles ${\tt{Angle}}(x_i,y_i)$.
\item[(c)] To know if a given arc belongs to $\mathcal{S} \cap \mathcal{D}(P,d)$,
take the middle $M$ of this arc and compute the crossing number and $d_{\min}$ for $\mathcal{S}$ and $M$ using Algorithm 2. 
The corresponding arc belongs to $\mathcal{S} \cap \mathcal{D}(P,d)$ if and only if
the crossing  number is odd or $d_{\min}=0$
\end{itemize}
The complexity of this algorithm is $O(n^2)$ where $n$ is the number of edges.
Algorithm 4 which has complexity $O(n \ln n)$ selects the appropriate arcs in a more efficient manner.
In this algorithm, the extremities of these arcs are stored, without repetitions, in 
the list {\tt{Intersections}} which is updated along the iterations of the first 
{\textbf{For}} loop of Algorithm 4: {\tt{Intersections}}$(i)$ will be the $i$-th "relevant" (see below) intersection point found.
To know the arcs that belong to $\mathcal{S} \cap \mathcal{D}(P,d)$, a second list {\tt{Arcs}} is used:
the $i$-th element of list {\tt{Arcs}} is 1 if and only if
the arc from the boundary of $\mathcal{D}(P,d)$ obtained starting at {\tt{Intersections}}$(i)$ and ending at 
the next element from list {\tt{Intersections}} found travelling counter clockwise on the boundary of $\mathcal{D}(P, d)$ belongs to 
$\mathcal{S} \cap \mathcal{D}(P,d)$. To produce this information, when an intersection between $\mathcal{S}$ and
$\mathcal{C}(P, d)$ is found we need to know the type of this intersection, knowing that there are three
types of intersections: 
\begin{itemize}
\item[$T_1$:] the intersection is not "relevant", i.e., there is no arc from $\mathcal{S} \cap \mathcal{C}(P,d)$
starting or ending at this point;
\item[$T_2$:] there is an arc from $\mathcal{S} \cap \mathcal{C}(P,d)$ starting at this point (in this case the corresponding
entry of {\tt{Arcs}} is one);
\item[$T_3$:] there is an arc from $\mathcal{S} \cap \mathcal{C}(P,d)$ ending at this point (in this case the corresponding
entry of {\tt{Arcs}} is zero).
\end{itemize}
Now let us go back to the 6 cases (i)-(vi) discussed above and considered in the first {\textbf{For}} loop of
Algorithm 4. It remains to explain how to determine in each of these cases the intersection type when an intersection is found.

First, since vertices belonging to the boundary of $\mathcal{D}(P,d)$ are starting vertices of an edge and ending vertices
of another edge, to avoid counting them twice, we do not consider the intersection points that are starting vertices
of an edge. With this convention, in case (i), i.e., subcases $A_1$ and $A_2$ in Figure \ref{fignotboundary}, we do not need to store intersection points,
even if $S_i$ belongs to $\mathcal{D}(P,d)$.

In case (ii), corresponding to subcases $B_1, B_2$, and $B_3$ in Figure \ref{fignotboundary}, if 
$\overline{S_i S_{i+1}}$ and $\mathcal{C}(P,d)$ have an intersection point that is different from $S_i$ 
then this intersection point is stored in list {\tt{Intersections}} and it is of type $T_2$: the corresponding
entry in {\tt{Arcs}} is one (these type $T_2$ intersections are represented by red balls in Figure \ref{fignotboundary}).

In case (iv), corresponding to case $C$ in Figure \ref{fignotboundary}, there is a single intersection point
between $\overline{S_i S_{i+1}}$ and $\mathcal{C}(P,d)$ and it is of type $T_3$: the corresponding
entry in {\tt{Arcs}} is zero (these type $T_3$ intersections are represented by red circles in Figure \ref{fignotboundary}).

Case (v) corresponds to cases $D_1, D_2$, and $D_3$ in Figure \ref{fignotboundary}. In subcase $D_1$, i.e.,
when $\overline{S_i S_{i+1}}$ and $\mathcal{C}(P,d)$ have two intersections, the first one encountered when travelling from
$S_i$ to $S_{i+1}$ is of type $T_3$ while the second one is of type $T_2$. In subcase $D_2$, $\overline{S_i S_{i+1}}$ and $\mathcal{C}(P,d)$ 
have a single intersection which is of type $T_1$.

Let us now consider cases (iii) and (vi), the cases where $S_{i+1}$ is on the boundary of $\mathcal{D}(P,d)$.
We want to determine the intersection type for $S_{i+1}$. 
This is done using an auxiliary algorithm, Algorithm 3, that takes
as entries $P$ and $d$ (the center and radius
of $\mathcal{C}(P,d)$) and three successive vertices $S_i, S_{i+1}$, and $S_{i+2}$ of $\mathcal{S}$, knowing 
that $S_{i+1}$ is on the boundary of $\mathcal{D}(P,d)$.
The output variable {\tt{Arc}} of this algorithm is one (resp. zero) if and only 
if $S_{i+1}$ is of type $T_2$ or $T_3$ (resp. type $T_1$).
What matters to determine the intersection type for $S_{i+1}$ is whether
$\overline{S_i S_{i+1}}$ is contained in 
some half-space (to be specified below) 
that does not contain $P $ or not.
An additional input variable of Algorithm 3 described below, variable {\tt{In}}, takes the value zero in the former case and the value one in the latter case.
To explain this algorithm, it is convenient to introduce two half spaces $\mathcal{H}_{{\tt{Right}}}$
and $\mathcal{H}_{P}$ and a line $L_1$. These half spaces and lines depend on the
entries of Algorithm 3, i.e., $P$ and $d$ (the center and radius
of $\mathcal{C}(P,d)$) and three successive vertices $S_i, S_{i+1}$, and $S_{i+2}$ of $\mathcal{S}$.
Line $L_1$ is the line that contains line segment $\overline{S_i S_{i+1}}$.
The open half space $\mathcal{H}_{{\tt{Right}}}$ is the set of points that are to the right of line $L_1$
when travelling on this line in the direction $S_i \rightarrow S_{i+1}$.
Denoting by $L_2$ the line that is tangent to the circle $\mathcal{C}(P,d)$ at $S_{i+1}$ (recall that 
$S_{i+1}$ belongs to $\mathcal{C}(P,d)$), the closed half space  $\mathcal{H}_{P}$ is the set 
of points that are on the side of line
$L_2$ that does not contain $P$, including $L_2$. The definitions of these sets follow.

For $L_1$ and $\mathcal{H}_{{\tt{Right}}}$, we obtain:
\begin{equation} 
\left\{
\begin{array}{l}
\mbox{if }x_{S_{i+1}}=x_{S_i} \mbox{ and }y_{S_{i+1}}>y_{S_i} \mbox{ then }\\
\left\{
\begin{array}{l}
L_1  =  \{ (x,y)\;:\; x=x_{S_i}\},\\
\mathcal{H}_{{\tt{Right}}}  =   \{ (x, y)\;:\; x>x_{S_i}\}.
\end{array}
\right.\\
\mbox{If }x_{S_{i+1}}=x_{S_i} \mbox{ and }y_{S_{i+1}}<y_{S_i} \mbox{ then }\\
\left\{
\begin{array}{l}
L_1  =  \{ (x, y)\;:\; x=x_{S_i}\},\\
\mathcal{H}_{{\tt{Right}}}  =   \{ (x, y)\;:\; x<x_{S_i}\}.
\end{array}\right.\\
\mbox{If }x_{S_{i+1}}>x_{S_i}\mbox{ then }\\
\left\{
\begin{array}{l}
L_1  =  \{ (x, y)\;:\; y=y_{S_i} + \frac{y_{S_{i+1}} - y_{S_i}}{x_{S_{i+1}} - x_{S_i}} (x-x_{S_i})\},\\
\mathcal{H}_{{\tt{Right}}}  =   \{ (x, y)\;:\; y<y_{S_i} + \frac{y_{S_{i+1}} - y_{S_i}}{x_{S_{i+1}} - x_{S_i}} (x-x_{S_i}) \}.
\end{array}
\right.\\
\mbox{If }x_{S_{i+1}}<x_{S_i}\mbox{ then }\\
\left\{
\begin{array}{l}
L_1  =  \{ (x, y)\;:\; y=y_{S_i} + \frac{y_{S_{i+1}} - y_{S_i}}{x_{S_{i+1}} - x_{S_i}}  x_{S_i}(x-x_{S_i})\},\\
\mathcal{H}_{{\tt{Right}}}  =   \{ (x, y)\;:\; y>y_{S_i} + \frac{y_{S_{i+1}} - y_{S_i}}{x_{S_{i+1}} - x_{S_i}} (x-x_{S_i}) \}.
\end{array}
\right.
\end{array}
\right.
\end{equation}
Next observe that $M=(x,y) \in \mathcal{H}_P$ if and only if 
$\langle \overrightarrow{S_{i+1}M}, \overrightarrow{S_{i+1}P} \rangle \leq 0$
and therefore
\begin{equation} 
\mathcal{H}_{P}  =   \{ (x, y)\;:\;(x-x_{S_{i+1}})(x_P - x_{S_{i+1}}) + (y-y_{S_{i+1}})(y_P - y_{S_{i+1}}) \leq 0 \}.
\end{equation}
Let us first consider the case when
input variable
{\tt{In}} of Algorithm 3 is one, i.e., the case when 
$S_i$ does not belong to $\mathcal{H}_P$.
In this case, the edge $\overline{S_{i+1} S_{i+2}}$ can belong to three different regions,
denoted by $\mathcal{R}_1$, $\mathcal{R}_2$, and $\mathcal{R}_3$ in Figure \ref{figfourcases}
and respectively represented in pink at the top left, in green at the top right, and in yellow in the middle left figures of Figure \ref{figfourcases}.
In this Figure \ref{figfourcases}, type $T_2$ intersections are represented by red balls while
type $T_3$ intersections are represented by red circles.
\begin{figure}
\begin{tabular}{l}
\input{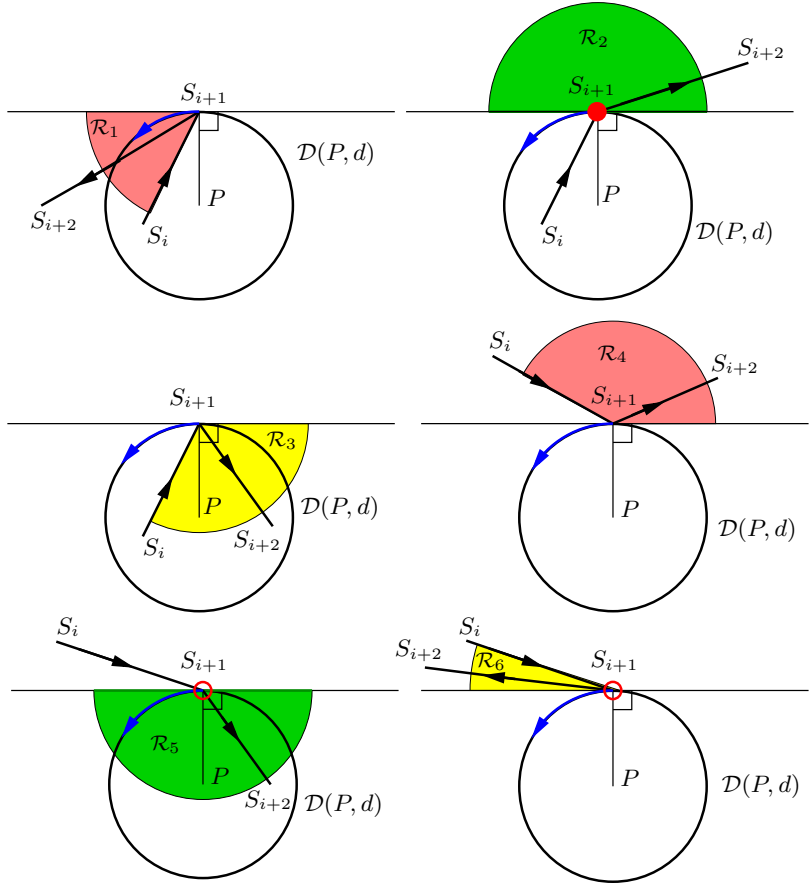}
\end{tabular}
\caption{The six cases where an endpoint $S_{i+1}$ of an edge is on the boundary of $\mathcal{D}(P,d)$.}
\label{figfourcases}
\end{figure}
Regions $\mathcal{R}_1, \mathcal{R}_2$, and $\mathcal{R}_3$ are given by (see Figure \ref{figfourcases}):
$$
\begin{array}{l}
\mathcal{R}_1   =  \overline{\mathcal{H}_{P}} \cap \overline{\mathcal{H}_{\tt{Right}} \cup L_1},\;\; \mathcal{R}_2   =  \mathcal{H}_{P},
\mbox{ and }\mathcal{R}_3   =  \overline{\mathcal{H}_{P}} \cap \mathcal{H}_{\tt{Right}}.
\end{array}
$$
If $S_{i+2}$ belongs to $\mathcal{R}_1$ or $\mathcal{R}_3$,
then $S_{i+1}$ is a type $T_1$ intersection while if 
$S_{i+2}$ belongs to $\mathcal{R}_2$
$S_{i+1}$ is a type $T_2$ intersection.

We now consider the case where input variable
{\tt{In}} of Algorithm 3 is zero, i.e., the case
where 
$S_{i+1}$ belongs to $\mathcal{H}_P$.
In this case, $S_{i+2}$ can belong to three different regions,
denoted by $\mathcal{R}_4$, $\mathcal{R}_5$, and $\mathcal{R}_6$ in Figure \ref{figfourcases}
and respectively represented in pink in the middle right, in green in the bottom left, and in yellow in the bottom right figures of Figure \ref{figfourcases}.

Regions $\mathcal{R}_4, \mathcal{R}_5$, and $\mathcal{R}_6$ are given by (see Figure \ref{figfourcases}):
$$
\begin{array}{l}
\mathcal{R}_4   =  \mathcal{H}_{P} \cap \overline{\mathcal{H}_{\tt{Right}} \cup L_1},\;\; \mathcal{R}_5   =  \overline{\mathcal{H}_{P}},
\mbox{ and }\mathcal{R}_6   =  \mathcal{H}_{P} \cap \mathcal{H}_{\tt{Right}}.
\end{array}
$$

If $S_{i+2}$ belongs to $\mathcal{R}_4$ or $\mathcal{R}_6$
then $S_{i+1}$ is a type $T_1$ intersection while 
if $S_{i+2}$ belongs to $\mathcal{R}_5$
$S_{i+2}$ is a type $T_3$ intersection.

Summarizing our observations, if $S_{i+1}$ belongs to the boundary of $\mathcal{D}(P,d)$, this intersection
is stored as a "relevant" intersection (it is not a type $T_1$ intersection) if and only if
{\tt{In}}=1 and $S_{i+2} \in \mathcal{R}_2$ (in this case, it is a type $T_2$ intersection)   
or {\tt{In}}=0 and $S_{i+2} \in \mathcal{R}_5$ (in this case, it is a type $T_3$ intersection).\\
\rule{\linewidth}{1pt}
\par {\textbf{Algorithm 3: Given three successive vertices $S_i, S_{i+1}$, and $S_{i+2}$ of a 
simple polygone
$\mathcal{S}$ and a circle of center $P$ and radius $d>0$ with $S_{i+1}$ belonging to this circle, the algorithm determines if $S_{i+1}$
is or is not a starting or ending point of an arc from
the boundary of $\mathcal{D}(P,d) \cap \mathcal{S}$.}}\\
\rule{\linewidth}{1pt}
\par {\textbf{Inputs:}} $P, d, S_i, S_{i+1}, S_{i+2}, {\tt{In}}$.\\
\par {\textbf{Initialization:}} {\tt{Arc}}=0.\\
\par \textbf{If} {\tt{In}} and $(x_{S_{i+2}}-x_{S_{i+1}})(x_P - x_{S_{i+1}}) + (y_{S_{i+2}}-y_{S_{i+1}})(y_P - y_{S_{i+1}}) \leq 0$ \textbf{then} 
${\tt{Arc}}=1.$
\par \textbf{Else if} $\overline{{\tt{In}}}$ and 
$(x_{S_{i+2}}-x_{S_{i+1}})(x_P - x_{S_{i+1}}) + (y_{S_{i+2}}-y_{S_{i+1}})(y_P - y_{S_{i+1}}) > 0$ \textbf{then} 
${\tt{Arc}}=1.$
\par \textbf{End if}\\
\par {\textbf{Output:}} {\tt{Arc}}.\\
\rule{\linewidth}{1pt}
In the end of the first {\textbf{For}} loop of Algorithm 4,
the "relevant"  intersections  points  $(x_i, y_i)$ 
of $\mathcal{S}$ and $\mathcal{C}(P,d)$ are stored in list {\tt{Intersections}}.
We then sort these intersections
in ascending order of their angles
${\tt{Angle}}(x_i,y_i)$ where we recall that
{\tt{Angle}} is defined in \eqref{polarangle}. The values in list {\tt{Arcs}} are sorted correspondingly.
For {\tt{Nb\_Intersections}} intersection points, this defines {\tt{Nb\_Intersections}} arcs on the circle.
At $i$-th iteration of the
second {\textbf{For}} loop of Algorithm 4, the
$i$-th arc is considered. If this arc belongs  to $\mathcal{S} \cap \mathcal{D}(P,d)$,
i.e., if {\tt{Arcs}}$(i)=1$, the corresponding line integral \eqref{greenarc} is computed. The sum of these line integrals
makes up the last part of line integral \eqref{green} for $\mathbb{D}=\mathcal{D}(P, d) \cap \mathcal{S}$.

It remains to check that the algorithm correctly computes $F_D(d)$ when  variable {\tt{Nb\_Intersections}}
in the end of Algorithm 4 is null.
This can occur in three different manners reported in Figure \ref{emptyinterpoly}:
(i) $\mathcal{D}(P,d) \cap \mathcal{S} = \emptyset$, (ii) the polygone $\mathcal{S}$ is contained
in $\mathcal{D}(P,d)$, and (iii) the disk $\mathcal{D}(P,d)$ is contained
in $\mathcal{S}$. 
\begin{figure}
\begin{center} 
\begin{tabular}{l}
\input{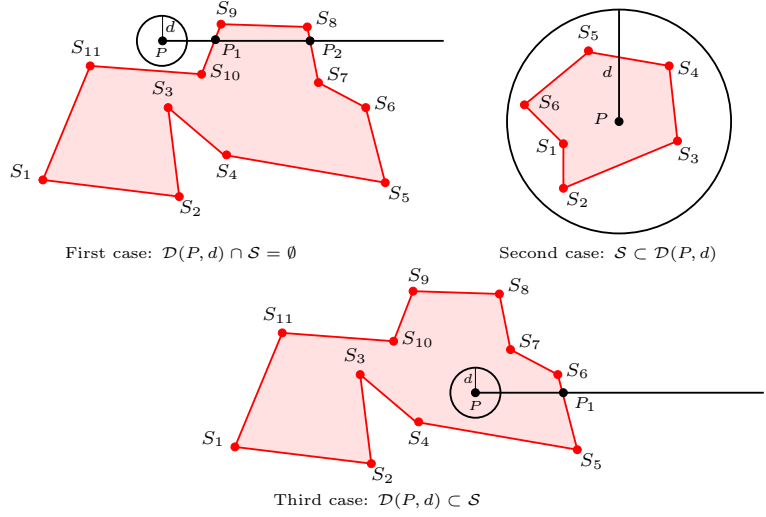}
\end{tabular}
\caption{Cases where {\tt{Nb\_Intersections}}=0.} 
\label{emptyinterpoly}
\end{center} 
\end{figure} 
Case (ii) corresponds to $\ell=\mathcal{L}$ and in this case
$F_D(d)=1$. If $\ell \neq \mathcal{L}$, case (i) occurs when $P$ is outside $\mathcal{S}$
and case (iii) when $P$ belongs to the relative interior of $\mathcal{S}$.
To know if case (i) or case (iii) occurs, we use
the {\em{crossing number}} computed by Algorithm 2.
If the crossing number is odd then $P$ is inside $\mathcal{S}$ and $F_D(d)=\pi d^2 / \mathcal{L}$.
Otherwise, the crossing number is even, $P$ is outside $\mathcal{S}$ (case (i)) and $F_D(d)=0$.
\rule{\linewidth}{1pt}
\par {\textbf{Algorithm 4: Computation of the value $F_D(d)$ of the cumulative distribution function 
of $D$ at $d$ when $X$ is uniformly distributed in a polygone contained in a plane with $P$ in that plane.}}\\
\rule{\linewidth}{1pt}
\par {\textbf{Inputs:}} $P$, the vertices $S_1, \ldots, S_n$, of polygone $\mathcal{S}$, {\tt{Crossing\_Number}}, $\mathcal{L}$, $d$.\\
\par {\textbf{Initialization:}} $\ell=0$ //Will store line integral \eqref{green} taking $\mathbb{D}=\mathcal{S} \cap \mathcal{D}(P, d)$,
\par \hspace*{3.3cm}//i.e., will compute the area of $\mathbb{D}=\mathcal{S} \cap \mathcal{D}(P, d)$.
\par {\tt{Intersections}}={\tt{Null}}. //List of the intersections found for $\mathcal{S}$ and $\mathcal{C}(P, d)$.
\par {\tt{Nb\_Intersections}}=0. //Number of intersections found for $\mathcal{S}$ and $\mathcal{C}(P, d)$.
\par {\tt{Arcs}}={\tt{Null}}. //Stores the arcs that are on the boundary of $\mathcal{S} \cap \mathcal{D}(P, d)$.\\
\par {\textbf{For} $i=1,\ldots,n$, 
\par {\hspace*{0.7cm}}//Check if $S_i$ belongs to $\mathcal{D}(P, d)$ or not:
\par {\hspace*{0.7cm}}{\textbf{If} $\|\overrightarrow{S_i P}\|_2 \leq d$ {\textbf{then} 
\par {\hspace*{1.1cm}}{\textbf{If} $\|\overrightarrow{S_{i+1} P}\|_2 < d$ {\textbf{then} //Cases $A_1$ and $A_2$ in Figure \ref{fignotboundary}
\par {\hspace*{1.5cm}}$\ell \leftarrow \ell+\frac{1}{2}\mathcal{I}_{\overline{S_i S_{i+1}}}$ where for a line segment $\overline{AB}$, $\mathcal{I}_{\overline{AB}}$ is given by \eqref{greensegment}.
\par {\hspace*{1.1cm}}{\textbf{Else If}$\|\overrightarrow{S_{i+1} P}\|_2 > d$ //Cases $B_1, B_2$, and $B_3$ in Figure \ref{fignotboundary}
\par {\hspace*{1.5cm}}Call Algorithm 1 to compute the intersections between the circle
\par {\hspace*{1.5cm}}of center $P$ and radius $d$ with the line segment $\overline{S_i S_{i+1}}$.
\par {\hspace*{1.5cm}}\textbf{If} there is an intersection point different from $S_i$ \textbf{then}
\par {\hspace*{1.9cm}}Let $I_i$ be this intersection point.
\par {\hspace*{1.9cm}}$\ell \leftarrow \ell+ \frac{1}{2}\mathcal{I}_{\overline{S_i I_i}}$ where for a line segment $\overline{AB}$, $\mathcal{I}_{\overline{AB}}$ is given by \eqref{greensegment}.
\par {\hspace*{1.9cm}}${\tt{Nb\_Intersections}}\leftarrow {\tt{Nb\_Intersections}} +1$.
\par {\hspace*{1.9cm}}{\tt{Intersections}}[${\tt{Nb\_Intersections}}]= I_i$.
\par {\hspace*{1.9cm}}{\tt{Arcs}}[${\tt{Nb\_Intersections}}]= 1$.
\par {\hspace*{1.5cm}}\textbf{End If}
\par {\hspace*{1.1cm}}{\textbf{Else}
\par {\hspace*{1.5cm}}$\ell \leftarrow \ell+\frac{1}{2}\mathcal{I}_{\overline{S_i S_{i+1}}}$ where for a line segment $\overline{AB}$, $\mathcal{I}_{\overline{AB}}$ is given by \eqref{greensegment}.
\par {\hspace*{1.5cm}}Call Algorithm 3 with input variables $P, d, S_i, S_{i+1}, S_{i+2}$ and with
\par {\hspace*{1.5cm}}variable {\tt{In}} set to 1.
\par {\hspace*{1.5cm}}{\textbf{If}} the variable {\tt{Arc}} returned by this algorithm is 1 \textbf{then}
\par {\hspace*{1.9cm}}${\tt{Nb\_Intersections}}\leftarrow {\tt{Nb\_Intersections}} +1$.
\par {\hspace*{1.9cm}}{\tt{Intersections}}[${\tt{Nb\_Intersections}}]= S_{i+1}$.
\par {\hspace*{1.9cm}}{\tt{Arcs}}[${\tt{Nb\_Intersections}}]= 1$.
\par {\hspace*{1.5cm}}{\textbf{End If}}
\par {\hspace*{1.1cm}}{\textbf{End If}
\par {\hspace*{0.7cm}}{\textbf{Else}}
\par {\hspace*{1.1cm}}{\textbf{If} $\|\overrightarrow{S_{i+1} P}\|_2 < d$ {\textbf{then} //Case $C$ in Figure \ref{fignotboundary}
\par {\hspace*{1.5cm}}Call Algorithm 1 to compute the intersection $I_i$ between the circle
\par {\hspace*{1.5cm}}of center $P$ and radius $d$ with the line segment $\overline{S_i S_{i+1}}$ (note that
\par {\hspace*{1.5cm}}the intersection is a single point).
\par {\hspace*{1.5cm}}$\ell \leftarrow \ell+ \frac{1}{2} \mathcal{I}_{\overline{I_i S_{i+1}}}$ where for a line segment $\overline{AB}$, $\mathcal{I}_{\overline{AB}}$ is given by \eqref{greensegment}.
\par {\hspace*{1.5cm}}${\tt{Nb\_Intersections}}\leftarrow {\tt{Nb\_Intersections}} +1$.
\par {\hspace*{1.5cm}}{\tt{Intersections}}[${\tt{Nb\_Intersections}}]= I_i$.
\par {\hspace*{1.5cm}}{\tt{Arcs}}[${\tt{Nb\_Intersections}}]= 0$.
\par {\hspace*{1.1cm}}{\textbf{Else If} $\|\overrightarrow{S_{i+1} P}\|_2 > d$ {\textbf{then} //Cases $D_1, D_2$, and $D_3$ in Figure \ref{fignotboundary}
\par {\hspace*{1.5cm}}Call Algorithm 1 to compute the intersections between the circle
\par {\hspace*{1.5cm}}of center $P$ and radius $d$ with the line segment $\overline{S_i S_{i+1}}$. 
\par {\hspace*{1.5cm}}{\textbf{If} there are two intersection points {\textbf{then}}
\par {\hspace*{1.9cm}}Let $I_{i 1}$ and $I_{i 2}$ be these intersection points where $I_{i 1}$ and $I_{i 2}$
satisfy 
\par {\hspace*{1.9cm}} $\frac{x_{I_{i 1}}-x_{S_{i}}}{x_{S_{i+1}}-x_{S_i}} \leq \frac{x_{I_{i 2}}-x_{S_{i}}}{x_{S_{i+1}}-x_{S_i}}$
if $x_{S_{i+1}} \neq x_{S_i} $ and 
$
\frac{y_{I_{i 1}}-y_{S_{i}}}{y_{S_{i+1}}-y_{S_i}} \leq \frac{y_{I_{i 2}}-y_{S_{i}}}{y_{S_{i+1}}-y_{S_i}}
$ 
\par {\hspace*{1.9cm}} if $x_{S_{i+1}}= x_{S_i} $.
\par {\hspace*{1.9cm}}$\ell \leftarrow \ell+ \frac{1}{2} \mathcal{I}_{\overline{I_{i 1} I_{i 2}}}$ where for a line segment $\overline{AB}$, $\mathcal{I}_{\overline{AB}}$ is given by \eqref{greensegment}.
\par {\hspace*{1.9cm}}${\tt{Nb\_Intersections}}\leftarrow {\tt{Nb\_Intersections}} +2$.
\par {\hspace*{1.9cm}}{\tt{Intersections}}[${\tt{Nb\_Intersections-1}}]= I_{i 1}$.
\par {\hspace*{1.9cm}}{\tt{Intersections}}[${\tt{Nb\_Intersections}}]= I_{i 2}$.
\par {\hspace*{1.9cm}}{\tt{Arcs}}[${\tt{Nb\_Intersections-1}}]= 0$.
\par {\hspace*{1.9cm}}{\tt{Arcs}}[${\tt{Nb\_Intersections}}]= 1$.
\par {\hspace*{1.5cm}}{\textbf{End If}
\par {\hspace*{1.1cm}}{\textbf{Else}
\par {\hspace*{1.5cm}}Call Algorithm 1 to compute the intersections between the circle
\par {\hspace*{1.5cm}}of center $P$ and radius $d$ with the line segment $\overline{S_i S_{i+1}}$. 
\par {\hspace*{1.5cm}}{\textbf{If}} there is one intersection {\textbf{then}}
\par {\hspace*{1.9cm}}Call  Algorithm 3 with input variables  $P, d, S_i, S_{i+1}, S_{i+2}$ 
\par {\hspace*{1.9cm}}and with variable {\tt{In}} set to 0.
\par {\hspace*{1.9cm}}{\textbf{If}} the variable {\tt{Arc}} returned by this algorithm is 1 \textbf{then}
\par {\hspace*{2.3cm}}${\tt{Nb\_Intersections}}\leftarrow {\tt{Nb\_Intersections}} +1$.
\par {\hspace*{2.3cm}}{\tt{Intersections}}[${\tt{Nb\_Intersections}}]= S_{i+1}$.
\par {\hspace*{2.3cm}}{\tt{Arcs}}[${\tt{Nb\_Intersections}}]= 0$.
\par {\hspace*{1.9cm}}{\textbf{End If}}
\par {\hspace*{1.5cm}}{\textbf{Else If}} there are two intersections $I_i$  and $S_{i+1}$ \textbf{then}
\par {\hspace*{1.9cm}}$\ell \leftarrow \ell+ \frac{1}{2} \mathcal{I}_{\overline{I_i S_{i+1}}}$ where for a line segment $\overline{AB}$, 
\par {\hspace*{1.9cm}}$\mathcal{I}_{\overline{AB}}$ is given by \eqref{greensegment}.
\par {\hspace*{1.9cm}}${\tt{Nb\_Intersections}}\leftarrow {\tt{Nb\_Intersections}} +1$.
\par {\hspace*{1.9cm}}{\tt{Intersections}}[${\tt{Nb\_Intersections}}]= I_i$.
\par {\hspace*{1.9cm}}{\tt{Arcs}}[${\tt{Nb\_Intersections}}]= 0$.
\par {\hspace*{1.9cm}}Call  Algorithm 3 with input variables  $P, d, S_i, S_{i+1}, S_{i+2}$ 
\par {\hspace*{1.9cm}}and with variable {\tt{In}} set to 1.
\par {\hspace*{1.9cm}}{\textbf{If}} the variable {\tt{Arc}} returned by this algorithm is 1 \textbf{then}
\par {\hspace*{2.3cm}}${\tt{Nb\_Intersections}}\leftarrow {\tt{Nb\_Intersections}} +1$.
\par {\hspace*{2.3cm}}{\tt{Intersections}}[${\tt{Nb\_Intersections}}]= S_{i+1}$.
\par {\hspace*{2.3cm}}{\tt{Arcs}}[${\tt{Nb\_Intersections}}]= 1$.
\par {\hspace*{1.9cm}}{\textbf{End If}}
\par {\hspace*{1.5cm}}{\textbf{End If}}
\par {\hspace*{1.1cm}}{\textbf{End If}
\par {\hspace*{0.7cm}}{\textbf{End If}
\par {\textbf{End For}\\
\par {\textbf{If} {\tt{Nb\_Intersections}}=0 {\textbf{then}
\par {\hspace*{0.7cm}}{\textbf{If} $\ell=\mathcal{L}$ {\textbf{then}
\par {\hspace*{1.1cm}}$F_D(d)=1$
\par {\hspace*{0.7cm}}{\textbf{Else if} variable {\tt{Crossing\_Number}} is odd {\textbf{then}
\par {\hspace*{1.1cm}}//$\mathcal{D}(P,d)$ is inside the polygone
\par {\hspace*{1.1cm}}$F_D(d)=\pi d^2 / \mathcal{L}$
\par {\hspace*{0.7cm}}{\textbf{Else}
\par {\hspace*{1.1cm}}//$\mathcal{D}(P,d)$ has no intersection with the polygone
\par {\hspace*{1.1cm}}$F_D(d)=0$
\par {\hspace*{0.7cm}}{\textbf{End If}
\par {\textbf{Else}
\par {\hspace*{0.7cm}}{\textbf{Sort} the elements $(x_i,y_i)$ of list {\tt{Intersections}}
\par {\hspace*{0.7cm}}by ascending order of their angles ${\tt{Angle}}(x_i,y_i)$ and sort the elements of list 
\par {\hspace*{0.7cm}}{\tt{Arcs}} correspondingly. 
\par {\hspace*{0.7cm}}Let again {\tt{Intersections}} and  {\tt{Arcs}} be the corresponding sorted lists.
\par {\hspace*{0.7cm}}{\textbf{For} $i=1,\ldots,{\tt{Nb\_Intersections}}$
\par {\hspace*{1.1cm}}{\textbf{If}} {\tt{Arcs}}[i]=1 {\textbf{then}
\par {\hspace*{1.5cm}}$\ell \leftarrow \ell+\frac{1}{2} \mathcal{I}_{\stackrel{\frown}{A B}_{d, P}}$ where $A={\tt{Intersections}}[i]$, 
$$
B=
\left\{
\begin{array}{ll}
{\tt{Intersections}}[i+1] & \mbox{ if }i<{\tt{Nb\_Intersections}},\\
{\tt{Intersections}}[1] & \mbox{ if }i={\tt{Nb\_Intersections}},
\end{array}
\right.
$$
\par {\hspace*{1.5cm}}and where $\mathcal{I}_{\stackrel{\frown}{A B}_{d, P}}$ is obtained  substituting $R_0$ by $d$ in \eqref{greenarc}.
\par {\hspace*{1.1cm}}{\textbf{End If}}
\par {\hspace*{0.7cm}}{\textbf{End For}
\par {\hspace*{0.7cm}}$F_D(d)=\ell / \mathcal{L}$.
\par {\textbf{End If}\\
\par {\textbf{Output:}} $F_D(d)$.\\
\rule{\linewidth}{1pt}\\
After calling Algorithm 2, if the crossing number 
is odd, we know that $P$ belongs to the relative interior of $\mathcal{S}$ and for
$0 \leq d \leq d_{\min}$, we have $f_D(d)=\frac{2 \pi d}{\mathcal{L}}$.
For $d \geq d_{\max}$ or $d \leq 0$, the density is null.
If the crossing number is even, $f_D(d)$ is null for $0 \leq d \leq d_{\min}$. For $d_{\min} \leq d \leq d_{\max}$, 
Algorithm 5 provides approximations of the density at points
$d_i, i=1,\ldots,N-1$. \\
\rule{\linewidth}{1pt}
\par {\textbf{Algorithm 5: Computation of the approximate density of $D$ (distance from $P$ to a random variable
uniformly distributed in a polygone) in the range $[d_{\min}, d_{\max}]$.}}\\
\rule{\linewidth}{1pt}
\par {\textbf{Inputs:}} The vertices $S_1, \ldots, S_n$ of a polygone contained in a plane,
\par a point $P$ in this plane, and the number $N$ of discretization points.\\
\par {\textbf{Initialization:}} Call Algorithm 2 to compute $d_{\min}, d_{\max},$ the crossing number,
\par and the area $\mathcal{L}$ of $\mathcal{S}$.
\par {\tt{F\_Old}}$=0$. 
\par {\textbf{For}} $i=1,\ldots,N-1$,
\par \hspace*{0.7cm}Compute $d_i=d_{\min} + \frac{(d_{\max}-d_{\min}) i}{N}$.
\par \hspace*{0.7cm}Call Algorithm 4 with input variables the crossing number, $\mathcal{L}$, $d_{\min}$, $d_{\max}$,
\par \hspace*{0.7cm}and $d=d_i$ to compute $F_D( d_i)$.
\par \hspace*{0.7cm}Compute
${\tilde f}_D(d_i) = \frac{N \Big[ F_D( d_{i} )-{\tt{F\_Old}} \Big] }{d_{\max}-d_{\min}}$ and set ${\tt{F\_Old}}=F_D( d_i)$.
\par {\textbf{End For}}\\
\par {\textbf{Outputs:}} ${\tilde f}_D(d_i), i=1,\ldots,N-1$.\\
\rule{\linewidth}{1pt}\\
Finally, we consider the case where the polygone is contained in a plane $\mathcal{P}$
and $P$ is not contained in that plane. In this situation, referring to arguments
from Section \ref{diskdist}, we can use the previous results reparametrizing the problem
and replacing $P$ and $d$ respectively by $P_0$, the projection of $P$ onto $\mathcal{P}$, and
$R(d)=\sqrt{d^2 - \|\overrightarrow{P P_0}\|_2^2}$. 
Indeed, since $\mathcal{S} \subset \mathcal{P}$, we have
$$
\mathcal{S} \cap \mathcal{B}(P, d)= \mathcal{S} \cap \mathcal{P} \cap \mathcal{B}(P, d)=  \mathcal{S} \cap  \mathcal{D}(P_0, R(d))
$$
where $\mathcal{D}(P_0, R(d))$ is the disk of center $P_0$ and radius $R(d)$ contained in the plane
$\mathcal{P}$ (see Figure \ref{figdiskR3}). 
Since $S_1, S_2$, and $S_3$ are consecutive extremal points of $\mathcal{S}$,
the vectors $\overrightarrow{S_2 S_1}$ and $\overrightarrow{S_2 S_3}$ are linearly independent.
Using Gram-Schmidt orthonormalization process, we obtain two points $S'_1$ and $S'_3$ 
of the plane $\mathcal{P}$ such that the vectors  $\overrightarrow{S_2 S'_1}$
and $\overrightarrow{S_2 S'_3}$ are orthonormal and for any
point $Q$ in plane $\mathcal{P}$, the vector $\overrightarrow{S_2 Q}$ can be 
uniquely written as a linear combination of these vectors.
Vectors $\overrightarrow{S_2 S'_1}$ and $\overrightarrow{S_2 S'_3}$ are given by
$$
\overrightarrow{S_2 S'_1}=\frac{\overrightarrow{S_2 S_1}}{\|\overrightarrow{S_2 S_1}\|_2}
\mbox{ and }\overrightarrow{S_2 S'_3}=\frac{\overrightarrow{S_2 S_3}-\langle \overrightarrow{S_2 S_3},\overrightarrow{S_2 S'_1}\rangle \overrightarrow{S_2 S'_1}}{\|\overrightarrow{S_2 S_3}-\langle \overrightarrow{S_2 S_3},\overrightarrow{S_2 S'_1}\rangle \overrightarrow{S_2 S'_1}\|_2}.
$$
It follows that if $A$ is the  $(3,2)$ matrix $[\overrightarrow{S_2 S'_1}, \overrightarrow{S_2 S'_3}]$ whose first
column is $\overrightarrow{S_2 S'_1}$ and whose second column is $\overrightarrow{S_2 S'_3}$, then the matrix
$A^\transp A$ is invertible and the projection $P_0=\pi_{\mathcal{P}}[P]$ of $P$ on $\mathcal{P}$ can be expressed
as 
\begin{equation}\label{projformPo}
\overrightarrow{S_2 P_0}=A (A^\transp A)^{-1} A^\transp \overrightarrow{S_2 P}.
\end{equation}
Before calling Algorithms 2 and 4, we need to reparametrize the problem: we write
$\overrightarrow{S_2 P_0}=x_{P_0} \overrightarrow{S_2 S'_1} + y_{P_0} \overrightarrow{S_2 S'_3}=
A (x_{P_0}; y_{P_0})$
and $\overrightarrow{S_2 S_i}=A (x_i; y_i)$ for $i=1,\ldots,n$.
In particular, we have $(x_1, y_1)=(\|\overrightarrow{S_2 S_1}\|_2, 0)$ and $(x_2, y_2)=(0, 0)$.
Since $A$ has rank 2, eventually after re-ordering the lines of $A$, 
we can assume that $A$ is of the form $A=[A_0;a_0]$ where $A_0$ is 
a $(2,2)$ invertible matrix with $A_0(1,1) \neq 0$. 
Using Gaussian elimination, the system 
$\overrightarrow{S_2 P_0}=A (x_{P_0}; y_{P_0})$ can be written
$\left[\begin{array}{l}\;U_0\\0\;\; 0\end{array}  \right] \left[ \begin{array}{l}x_{P_0}\\y_{P_0}\end{array}  \right]=
\left[\begin{array}{l}b\\ 0 \end{array}\right]$
for some two-dimensional vector $b$ and an invertible upper triangular matrix $U_0=\left[\begin{array}{ll}U_{1 1} & U_{1 2}\\0 &  U_{2 2} \end{array}\right]$.
Another by-product of Gaussian elimination is the lower triangular matrix $L_0=\left[\begin{array}{ll}1 & 0\\L_{2 1} &  1 \end{array}\right]$ such that $A=L_0 U_0$ is the
$LU$ decomposition of $A_0$. We obtain 
\begin{equation} \label{coordP0}
x_{P_0}  =   \frac{\overrightarrow{S_2 P_0}(1)}{U_{1 1}}\Big[1+ \frac{U_{1 2} L_{2 1}}{U_{2 2}} \Big]-\frac{U_{1 2}}{U_{1 1}} \overrightarrow{S_2 P_0}(2),\;
y_{P_0}  =   \frac{\overrightarrow{S_2 P_0}(2)- L_{2 1} \overrightarrow{S_2 P_0}(1)}{U_{22}},
\end{equation}
and for $i \geq 3$,
\begin{equation}\label{coordxi}
x_i  =   \frac{\overrightarrow{S_2 S_i}(1)}{U_{1 1}}\Big[1+ \frac{U_{1 2} L_{2 1}}{U_{2 2}} \Big]-\frac{U_{1 2}}{U_{1 1}} \overrightarrow{S_2 S_i}(2),\;
y_i  =   \frac{\overrightarrow{S_2 S_i}(2)- L_{2 1} \overrightarrow{S_2 S_i}(1)}{U_{22}}.
\end{equation}
Algorithms 2, 3, and 4 can now be used with $P$  replaced by $(x_{P_0}, y_{P_0})$ and where
the coordinates of the extremal points of the polygone are $(x_i, y_i), i=1,\ldots,n$.
First, Algorithm 2 is called to compute the area $\mathcal{L}$ of $\mathcal{S}$, the crossing number for
$P_0$ and $\mathcal{S}$,
and the minimal and maximal distances from $P_0$ to the boundary of $\mathcal{S}$, respectively denoted
by $d_{\min}$ and $d_{\max}$.
Recalling the definition \eqref{projformPo} of $P_0$, we introduce
\begin{equation} \label{formdmdM}
\begin{array}{l}
d_m = \sqrt{d_{\min}^2 + \|\overrightarrow{P P_0}\|_2^2} \mbox{ and }d_M = \sqrt{d_{\max}^2 + \|\overrightarrow{P P_0}\|_2^2}.
\end{array}
\end{equation}
With this notation, for $d \geq d_{M}$ or $d \leq 0$, the density is null and
if the crossing number is odd, i.e., if $P_0$ belongs to the relative interior of $\mathcal{S}$, then
for $0 \leq d \leq d_{m}$, we have $f_D(d)=\frac{2 \pi d}{\mathcal{L}}$.
Otherwise, if the crossing number is even, $f_D(d)$ is null for $0 \leq d \leq d_{m}$.\\ 

\par For $d_{m} \leq d \leq d_{M}$, 
Algorithm 6 provides approximations $\tilde f_D(d_i)$ of the value of the density at points
$d_i, i=1,\ldots,N-1$.\vspace*{0.4cm}\\
\rule{\linewidth}{1pt}
\par {\textbf{Algorithm 6: Computation of the approximate density of $D$ (distance from $P$ to a random variable
uniformly distributed in a polyhedron) in the range $[d_{m}, d_{M}]$.}}\\
\rule{\linewidth}{1pt}
\par {\textbf{Inputs:}} The vertices $S_1, \ldots, S_n$ of a polyhedron contained in a plane,
\par the point $P$, and the number $N$ of discretization points.\\
\par {\textbf{Initialization:}} Call Algorithm 2 with $P$  replaced by $(x_{P_0}, y_{P_0})$ (see equation 
\par \eqref{coordP0}) and where the coordinates of the extremal points of the polyhedron are 
\par $(x_i, y_i), i=1,\ldots,n$, given by \eqref{coordxi}. This will compute the area $\mathcal{L}$ of $\mathcal{S}$, the 
\par crossing number for $P_0$ and $\mathcal{S}$, and the minimal and maximal distances from $P_0$ 
\par to the boundary of $\mathcal{S}$, respectively denoted by $d_{\min}$ and $d_{\max}$.
\par {\tt{F\_Old}}$=0$. 
\par Compute $d_m$ and $d_M$ given by \eqref{formdmdM}.
\par {\textbf{For}} $i=1,\ldots,N-1$,
\par \hspace*{0.7cm}Compute $d_i=d_{m} + \frac{(d_{M}-d_{m}) i}{N}$.
\par \hspace*{0.7cm}Call Algorithm 4 with input variables the crossing number, $\mathcal{L}$, $d_{\min}$, $d_{\max}$,
\par \hspace*{0.7cm} and $d=\sqrt{d_i^2 - \|\overrightarrow{P P_0}\|_2^2}$ to compute $F_D( d_i )$.
\par \hspace*{0.7cm}Compute
${\tilde f}_D(d_i) = \frac{N \Big[ F_D( d_{i} )-{\tt{F\_Old}} \Big] }{d_{\max}-d_{\min}}$ and set ${\tt{F\_Old}}=F_D( d_i)$.
\par {\textbf{End For}}\\
\par {\textbf{Oututs:}} ${\tilde f}_D(d_i), i=1,\ldots,N-1$.\\
\rule{\linewidth}{1pt}

\section{Numerical experiments}\label{sec:numexp}

We use Algorithm 6 (refereed to as {\tt{Green}} in the sequel since it is based
on Green's formula) to obtain approximations of the density 
of $D$ when $X$ is uniformly distributed in some polyhedra $\mathcal{S}$.\footnote{The Matlab code implementing the computations of the densities
discussed in this paper as well as the Matlab code of the nummerical experiments
of this section are available at \url{https://github.com/vguigues/Areas_Library}.}
We compare the performance of this algorithm with another algorithm
discussed in \cite{guiguesmoacyr2019} which computes the area of the intersection
of a disk and a polygone using a triangulation of the
polygone (we refer to this algorithm as
{\tt{Triangulation}} in what follows). The area of the intersection is then obtained computing the sum of the
areas of intersection of the disk with the triangles of the triangulation.

\par We start considering for $D$ the distance from the center of a rectangle
with side lengths 1 and 0.8 to a random variable with uniform
distribution in this rectangle. The corresponding density is given 
in Figure \ref{figuredenspolyhedronrec}.
\begin{figure}
\centering
\includegraphics[scale=0.7]{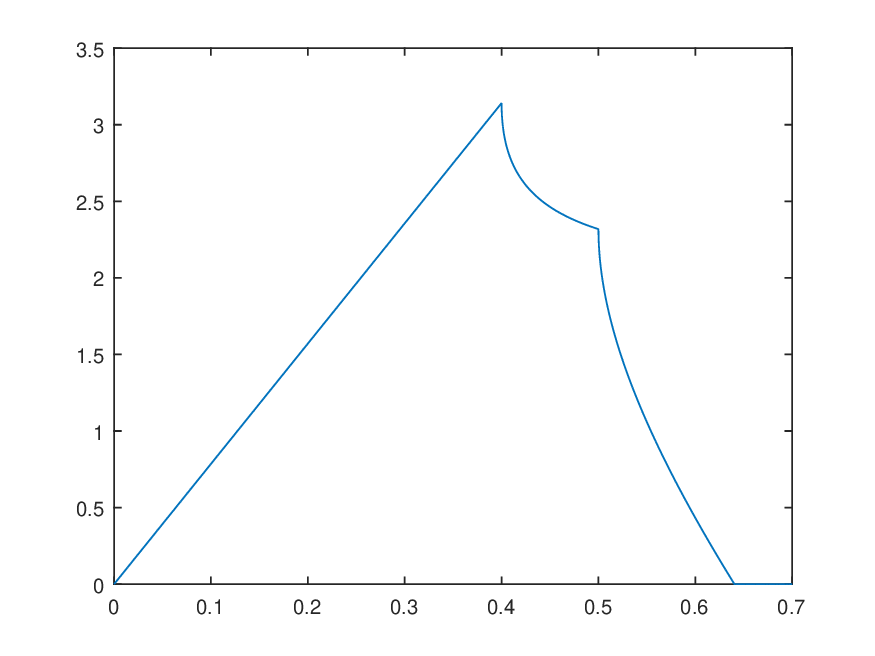}
\caption{Density of the distance from the center of a rectangle
with side lenghts 1 and 0.8 to a random variable with uniform
distribution in this rectangle.} 
\label{figuredenspolyhedronrec}
\end{figure} 
In this simple case, an analytic expression
of the density was given in \cite{stewartzhang2012} and we compare
the value of the density 
obtained using this analytic formula with the
approximations provided by our {\tt{Green}} and 
{\tt{Triangulation}} algorithms. 
The value of the density is computed at $N$ equally spaced  discretization points
$x_i,i=1,\ldots,N$,
from a set containing the support of $D$.
Varying $N$ in the set $\{10\,000, 20\,000, 50\,000, 100\,000\}$,
we obtain the maximal errors given in Table \ref{tablenum1}
where the maximal error is given by 
$\max_{i=1,\ldots,N} |f_D( x_i )-f_G( x_i )|$ and 
$\max_{i=1,\ldots,N} |f_D( x_i )-f_T( x_i )|$ for respectively
{\tt{Green}} and {\tt{Triangulation}} algorithms where 
$f_D$ stands for the density of $D$ given in \cite{stewartzhang2012}
and $f_G( x_ i)$ (resp. $f_T( x_i )$) is the approximation of the density computed by 
Algorithm {\tt{Green}} (resp. {\tt{Triangulation}}) at $x_i$.
In all cases the maximal error is very small which shows that 
{\tt{Green}} and {\tt{Triangulation}} algorithms  correctly compute 
the $N$ areas of  intersection of the disks and polygone of this 
example.\footnote{To approximate the density at 
$N$ points, we need to compute the cumulative distribution function 
at $N$ points and therefore when $N=100\,000$, Algorithms  {\tt{Green}} and {\tt{Triangulation}}
are called $100\,000$ times each to compute $100\,000$ areas.}
We also observe that the approximations are slightly better with
our algorithm {\tt{Green}} and, as expected, the maximal error decreases with 
$N$ for {\tt{Green}}. This is not the case for {\tt{Triangulation}}, probably
due to roundoff errors.\\

\begin{table}
\begin{tabular}{|c|c|c|}
\hline
Number $N$ of discretization points   & Maximal error - {\tt{Green}} & 
Maximal error - {\tt{Triangulation}}\\
\hline
$10\,000$ &  0.017  &  0.023  \\ 
\hline 
$20\,000$ &  0.010  &  0.020  \\
\hline
$50\,000$ &  0.007  & 0.04   \\
\hline   
$100\,000$ & 0.004   & 0.03   \\
\hline   
\end{tabular}
\caption{Maximal error obtained with {\tt{Green}} and 
{\tt{Triangulation}} algorithms computing the density of
$D$ ($D$ being the distance from the center of a rectangle
with side lengths 1 and 0.8 to a random variable with uniform
distribution in this rectangle) at $N$ discretization points.}\label{tablenum1}
\end{table}

\par We now compare algorithms {\tt{Green}} and {\tt{Triangulation}}
on 6 other examples. More precisely, we consider three polyhedra
(a triangle, a rectangle, and an arbitrary polygone) 
and in each case a point $P$
inside the polygone and a point $P$ outside, see
the left plots of Figures \ref{figuredenspolyhedron} and \ref{figuredenspolyhedron1}. 
\begin{figure}
\begin{tabular}{ll}
\input{Triangle.pstex_t} &  \input{Triangle_5_0B.pstex_t} \\
\input{TriangleIn.pstex_t} &  \input{Triangle_4_2B.pstex_t}\\
 \input{Rectangle.pstex_t} &  \input{Rectangle_1_1B.pstex_t}\\
 \input{RectangleIn.pstex_t} &  \input{Rectangle_6_5B.pstex_t}
\end{tabular}
\caption{Density of $D$ when $X$ is uniformly distributed in a polygone: some examples.} 
\label{figuredenspolyhedron}
\end{figure} 
\begin{figure}
\begin{tabular}{ll}
 \input{Poly_Ext.pstex_t} &  \input{Poly_40B.pstex_t}\\
 \input{Poly_Int.pstex_t} &  \input{Poly_43B.pstex_t}
\end{tabular}
\caption{Density of $D$ when $X$ is uniformly distributed in a polygone: some examples.} 
\label{figuredenspolyhedron1}
\end{figure} 
The values of the corresponding densities of $D$  
at a set of $N=10\,000$ equally spaced points $x_i,i=1,\ldots,N$, 
contained in the support of $D$,
were computed
using {\tt{Green}}
and {\tt{Triangulation}} algorithms and are represented in the right plots
of Figures \ref{figuredenspolyhedron} and \ref{figuredenspolyhedron1}.
The maximal errors $\max_{i=1,\ldots,N} |f_G(x_i) - f_T(x_i)|$
were 
$5.7\small{\times}10^{-10}$, $4.1\small{\times}10^{-8}$,
$8.6\small{\times}10^{-10}$,
$4.5\small{\times}10^{-10}$,
$2.3\small{\times}10^{-5}$, and
$1.7\small{\times}10^{-9}$
for the six examples (from top to bottom on Figures \ref{figuredenspolyhedron} and \ref{figuredenspolyhedron1}),
where $f_G(x_i)$ and $f_T(x_i)$ have the same meaning as before.
The fact that these errors are very small is an indication that
{\tt{Green}} and {\tt{Triangulation}} algorithms were correctly implemented.\\

\par Finally, we perform a last set of tests computing, 
using {\tt{Green}} and {\tt{Triangulation}} algorithms,
the 
areas of intersection of 350 disks and polyhedra as well 
as the mean and maximal time required to compute these areas.
The polyhedra and disks are generated as follows.
The coordinates of the centers of the disks (resp. the radii) 
are obtained sampling
independently from the uniform distribution on the interval
$[-100,100]$ (resp. $[50,250]$). To generate a polygone with
$4n$ vertices we sample $4 n$ points 
taking $n$ points in each orthant with polar angles generated
randomly and independently in this orthant and
radial coordinates generated randomly and independently in 
the interval [0,1000].
We then sort in ascending order the polar angles of these points.
This list defines the successive vertices of a 
star-shaped (simple) polygone.
An example of such a star-shaped polygone with $n=3$ and
$4n=12$ vertices is given in Figure \ref{figuredenspolyhedron11}, together
with a triangulation of this polygone.
\begin{figure}
\centering
\includegraphics[scale=0.7]{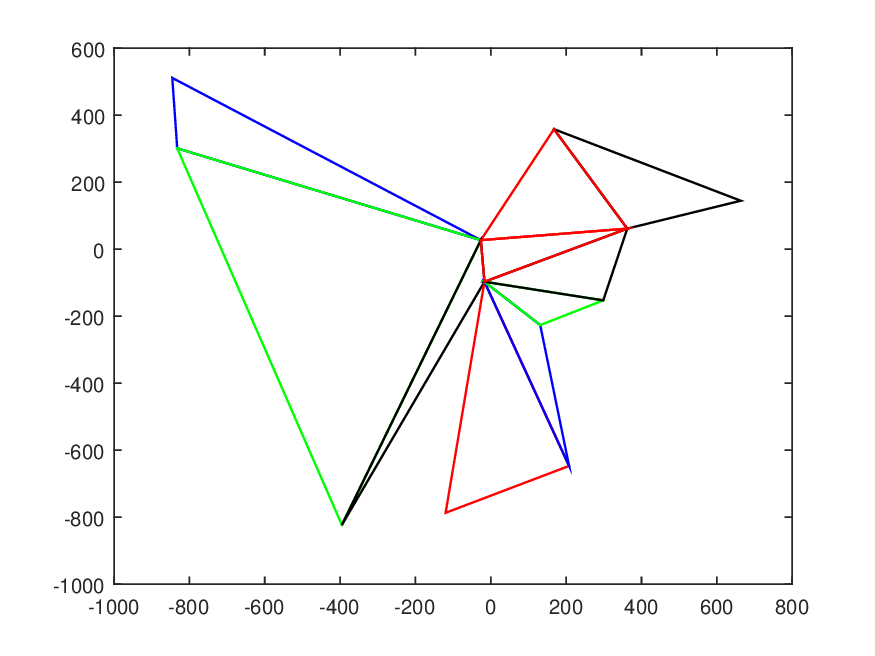}
\caption{Star-shaped polygone and a triangulation of this polygone.} 
\label{figuredenspolyhedron11}
\end{figure} 
For each value of $n$ in the set $\{10, 25, 50, 80, 100, 150, 200\}$
we generate 50 star-shaped polyhedra and disks as explained above and for each polygone
and disk, we compute the area of their intersection using {\tt{Green}} and {\tt{Triangulation}} algorithms.
For each value of $n$, the mean and maximal time (over the 50 instances)
required to compute these areas are reported in Table \ref{tablerandompoly}.
We also report in this table the mean and maximal errors defined respectively
by $\frac{1}{50}\sum_{i=1}^{50} |\mathcal{A}_G(i) - \mathcal{A}_T(i)|$  and  
$\max_{i=1,\ldots,50} |\mathcal{A}_G(i)-\mathcal{A}_T(i)|$ where 
$\mathcal{A}_G(i)$ and  $\mathcal{A}_T(i)$ are respectively the
areas  of the intersection for  instance $i$ computed with {\tt{Green}} and {\tt{Triangulation}} algorithms. 
We observe that these errors are negligible which shows that both algorithms compute
the same areas. Moreover, on all instances
{\tt{Green}} algorithm computes all areas
extremely quickly and much quicker than
{\tt{Triangulation}} algorithm.
For this latter algorithm,  both the mean
and maximal time required to compute the intersection areas significantly increase
with the number of vertices of the polygone.

\begin{table}
\begin{tabular}{|c|c|c|c|c|c|c|}
\hline
$4n$  &  Mean time-{\tt{Tr}} & Mean Time-{\tt{Gr}}& Max time-{\tt{Tr}}&  Max time-{\tt{Gr}}& Mean error &Max error\\
\hline
40  &   0.30   & 0.004  & 0.34  & 0.008 &  9.5$\small{\times}10^{-10}$  & $10^{-8}$    \\
\hline
100 &   1.99   & 0.007  & 2.54 & 0.014 &   2.5$\small{\times}10^{-9}$       & 4.1$\small{\times}10^{-8}$\\
\hline
200 &   8.09   & 0.012  & 8.96 & 0.018 & 3.8$\small{\times}10^{-9}$        & 3.6$\small{\times}10^{-8}$ \\
\hline
320 &   22.57  & 0.020  & 34.26 & 0.036 &   6.8$\small{\times}10^{-9}$     &  3.4$\small{\times}10^{-8}$ \\
\hline
400 &   45.21  & 0.021  & 669.76 & 0.039 &   1.1$\small{\times}10^{-8}$   & 1.7$\small{\times}10^{-7}$ \\ 
\hline
600 &   128.52 & 0.033  & 2 772.5 & 0.074 &   1.4$\small{\times}10^{-8}$  &1.2$\small{\times}10^{-7}$ \\
\hline
800 &  369.80  & 0.043  & 9 661.8 & 0.076 &  1.7$\small{\times}10^{-8}$   & 9.7$\small{\times}10^{-8}$\\
\hline
\end{tabular}
\caption{Mean and maximal time (in seconds) required to compute the areas of intersection of 50 polyhedra
with $4n$ vertices with disks
using {\tt{Green}} ({\tt{Gr}} for short in the table) and 
{\tt{Triangulation}} ({\tt{Tr}} for short in the table) algorithms.
The last two columns report respectively the mean and maximal errors.}\label{tablerandompoly}
\end{table}    

\section{Application to PSHA and extensions} \label{conclusion}

The results of Sections \ref{diskdist}, \ref{secball}, and \ref{secpoly} can be used 
to determine for the application presented in Section \ref{secapp} the distribution of the distance between the epicenter in $\mathcal{S}$  and
an arbitrary point $P$ when $\mathcal{S}$ is a union of disks, a union of balls, or the boundary of a polyhedron
in $\mathbb{R}^3$. For this application, the coordinates of $P$, of the centers of the disks and of two points on the
boundaries of these disks,
of the centers of the balls, and of the vertices 
$S_1, \ldots, S_n$ of the polyhedron are given providing for each point its latitude, its longitude, and its depth measured from the surface
of the earth. To apply the computations of the previous sections, we need to choose 
a Cartesian coordinate system and use the corresponding Cartesian coordinates of these points.
These coordinates are given as follows. We take for the positive $x$-axis the ray $OA$ where
$O$ is the center of the earth and $A$ is the point on the surface of the earth with longitude 0
and latitude 0. We take for the positive $z$-axis the ray $OB$ where $O$ is the center of the earth
and $B$ is the north pole. The positive $y$-axis is chosen correspondingly and corresponds to
ray $OC$ where $C$ is the point on the surface of the earth with latitude 0 and longitude 90$^o$ East.
Let $P$ be a point at depth $d$ from the surface of the earth with latitude $\varphi \in [0,90^{o}]$ (North or South) and longitude
$\lambda \in [0,180^{o}]$ (East or West). If the latitude is $\varphi$ North (resp. $\varphi$ South), we use
the notation $\varphi N$ (resp. $\varphi S$) while if the longitude is $\lambda$ East (resp. $\lambda$ West), we use
the notation $\lambda E$ (resp. $\lambda W$).
Denoting by $R$ the earth radius, the Cartesian coordinates of $P$ in the chosen Cartesian coordinate system are
$$
\begin{array}{l}
\Big((R-d)\cos \varphi \cos \lambda,(R-d)\cos \varphi \sin \lambda, (R-d)\sin \varphi \Big) \mbox{ if }P=(R-d,\lambda E, \varphi N),\\
\Big((R-d)\cos \varphi \cos \lambda,(R-d)\cos \varphi \sin \lambda, -(R-d)\sin \varphi \Big)\mbox{if }P=(R-d,\lambda E, \varphi S),\\
\Big((R-d)\cos \varphi \cos \lambda,-(R-d)\cos \varphi \sin \lambda, (R-d)\sin \varphi \Big)\mbox{if }P=(R-d,\lambda W, \varphi N),\\
\Big((R-d)\cos \varphi \cos \lambda,-(R-d)\cos \varphi \sin \lambda, -(R-d)\sin \varphi \Big)\mbox{if }P=(R-d,\lambda W, \varphi S).
\end{array}
$$

In the case where the $\ell_2$-norm is replaced by either the 
$\ell_1$-norm or the $\ell_{\infty}$-norm and when $\mathcal{S}$ is a union of disks contained in a plane
with $P$ in that plane, we can use the results of Section \ref{secpoly}. Indeed,
since the level curves of the $\ell_1$-norm and the $\ell_{\infty}$-norm in the plane are squares, to compute
the CDF of $D$ at a given point in these cases we need to determine the area of the intersection of a square (a particular polygone) with
disks. It also possible to extend Algorithm 5 to the case where
the $\ell_2$-norm is replaced by either the 
$\ell_1$-norm or the $\ell_{\infty}$-norm
and $\mathcal{S}$ is a union of simple polygones.

Another extension of interest is the case where $\mathcal{S}$ is an arbitrary polyhedron in $\mathbb{R}^3$.
In this case, the CDF and density of the corresponding random variable $D$ given by
$D(\omega)=\|\overrightarrow{P X(\omega)}\|_2$ for any $\omega \in \Omega$ can be approximated using
Monte Carlo methods. This is possible if we have at hand a black box able to decide if a given point in $\mathbb{R}^3$
belongs to polyhedron $\mathcal{S}$ or not.\\

\par {\textbf{Acknowledgments}} The author would like to thank Marlon Pirchiner who pointed out
useful references for PSHA. The author's research was 
partially supported by an FGV grant, CNPq grant 307287/2013-0, 
FAPERJ grants E-26/110.313/2014 and E-26/201.599/2014.

\addcontentsline{toc}{section}{References}
\bibliographystyle{plain}
\bibliography{CDF}

\end{document}